\documentclass[11pt]{article}

\usepackage[margin=1in]{geometry}
\usepackage[numbers,sort&compress]{natbib}

\usepackage[utf8]{inputenc}
\usepackage[T1]{fontenc}
\usepackage{url}
\usepackage{amsmath,amsthm,amssymb,amsfonts}
\usepackage{enumerate}
\usepackage{subfigure}
\usepackage{multirow}
\usepackage{hhline}
\usepackage{xcolor}
\usepackage{tikz}
\usetikzlibrary{arrows.meta,positioning,calc,patterns}
\usepackage{mdframed}
\usepackage{hyperref}
\usepackage{graphicx}
\usepackage{graphics}
\usepackage{algorithm}
\usepackage{algpseudocode}
\usepackage{epsfig,psfrag}
\usepackage{tabularx}
\usepackage{tabulary}
\usepackage{booktabs}
\usepackage{nicefrac}
\usepackage{microtype}
\usepackage[noabbrev,capitalise,nameinlink]{cleveref}

\hypersetup{
  breaklinks=true,
  colorlinks=true,
  linkcolor=blue,
  citecolor=blue,
  urlcolor=blue
}

\DeclareMathOperator{\argmax}{arg\,max}
\DeclareMathOperator{\Argmax}{Arg\,max}

\DeclareMathOperator{\co}{co}

\DeclareMathOperator{\dist}{dist}

\newtheorem{theorem}{Theorem}
\newtheorem{example}{Example}

\newtheorem{corollary}{Corollary}

\newtheorem{lemma}{Lemma}
\newtheorem*{lemma*}{Lemma}
\newtheorem{definition}{Definition}
\newtheorem{proposition}{Proposition}

\crefname{assumption}{Assumption}{Assumptions}
\Crefname{assumption}{Assumption}{Assumptions}

\newmdenv[
  linewidth=0.6pt,
  linecolor=black,
  skipabove=6pt,
  skipbelow=6pt,
  innertopmargin=6pt,
  innerbottommargin=6pt,
  innerleftmargin=8pt,
  innerrightmargin=8pt
]{restatementbox}

\newcommand{\R}{\mathbb R}
\newcommand{\one}{\mathbf 1}
\newcommand{\proj}{\boldsymbol{\Pi}_\perp}
\newcommand{\calS}{\mathcal S}
\newcommand{\calA}{\mathcal A}
\newcommand{\calH}{\mathcal H}

\newcommand{\missingfigure}[1]{%
  \fbox{%
    \begin{minipage}{0.68\linewidth}
    \centering
    Missing figure file: \texttt{#1}
    \end{minipage}%
  }%
}
\newcommand{\safeincludegraphics}[2][]{%
  \IfFileExists{#2}{\includegraphics[#1]{#2}}{%
    \IfFileExists{#2.pdf}{\includegraphics[#1]{#2}}{%
      \IfFileExists{#2.png}{\includegraphics[#1]{#2}}{%
        \IfFileExists{#2.eps}{\includegraphics[#1]{#2}}{\missingfigure{#2}}%
      }%
    }%
  }%
}

\allowdisplaybreaks

\title{Switching-Geometry Analysis of Deflated Q-Value Iteration}

\author{%
Donghwan Lee\\
Department of Electrical Engineering\\
Korea Advanced Institute of Science and Technology (KAIST)\\
Daejeon 34141, South Korea\\
\texttt{donghwan@kaist.ac.kr}
}

\date{}

\begin{document}

\maketitle

\begin{abstract}
This paper develops a joint spectral radius (JSR) framework for analyzing rank-one deflated Q-value iteration (Q-VI) in discounted Markov decision process control. Focusing on an all-ones residual correction, we interpret the resulting algorithm through the geometry of switching systems and, to the best of our knowledge, give the first JSR-based convergence analysis of deflated Q-VI for policy optimization problems.
Our analysis reveals that the standard Q-VI switching system model has JSR exactly the discount factor $\gamma\in (0,1)$, since all admissible subsystems share the all-ones vector as an invariant direction. By passing to the quotient space that removes this direction, we obtain a projected switching system model whose JSR governs the relevant error dynamics and may be strictly smaller than $\gamma$. Therefore, the deflated Q-VI admits a potentially sharper convergence-rate characterization than the ambient-space $\gamma$-bound. Finally, we prove that the correction is equivalent to a scalar recentering of standard Q-VI. Hence, the projected trajectory, and therefore the greedy-policy sequence, is unchanged relative to standard Q-VI initialized from the same point. The benefit of deflation is not a change in the induced decision-making problem, but a more precise JSR-based description of the convergence geometry after the redundant all-ones component is removed.
\end{abstract}

\section{Introduction}
Q-value iteration (Q-VI) is a foundational dynamic-programming algorithm for
finite discounted Markov decision processes (MDPs)
\cite{bertsekas1996neuro,puterman2014mdp}. Its standard deterministic
convergence analysis relies on the fact that the Bellman optimality operator is
a $\gamma$-contraction in the infinity norm
\cite{bertsekas1996neuro,puterman2014mdp}. Consequently, Q-VI converges to the
unique optimal action-value function $Q^\star$ at the classical exponential
rate $\gamma \in (0,1)$.

Although this contraction argument is fundamental, it is geometrically
conservative. A switching system viewpoint~\cite{lee2026beyond} makes this
conservatism explicit. The linear switching family associated with standard
Q-VI~\cite{liberzon2003switching,hespanha1999stability,shorten2007stability,lin2009stability}
has joint spectral radius (JSR)~\cite{tsitsiklis1997lyapunov,rota1960jsr,blondel2005jsr,jungers2009jsr}
exactly equal to $\gamma$, because every subsystem preserves the all-ones
direction and acts on it with eigenvalue $\gamma$. After quotienting out this
invariant direction, the transverse dynamics are governed by a projected
switching family with the so-called projected JSR $\bar\rho$, which can be strictly smaller
than the ambient contraction rate $\gamma$. Nevertheless, standard Q-VI can
still converge to $Q^\star$ no faster than $\gamma$ in the worst case, since a
pure all-ones error evolves exactly as $\gamma^k \one$.

In this paper, we study the (rank-one) deflated Q-VI update
\[
Q_{k+1}
=
F(Q_k)
+
\frac{\gamma}{1-\gamma}\,
d^\top \bigl(F(Q_k)-Q_k\bigr)\one,
\]
where $F$ is the Bellman optimality operator, $Q_k$ is the current estimate of
$Q^\star$, and $d$ is either the uniform distribution or a fixed
state-action distribution. Closely related deflated value-iteration schemes
were proposed in the classical work of~\cite{bertsekas1995rankone} and in the
recent rank-one modified value-iteration framework of~\cite{kolarijani2025rankone}.

The main contribution of the present paper is to provide, to the best of our
knowledge, the first rigorous JSR-based convergence theory for
deflated Q-VI in discounted MDP control problems. Our analysis is built on an exact
switching system formulation of the deflated Q-VI recursion~\cite{liberzon2003switching,hespanha1999stability,shorten2007stability,lin2009stability}
combined with JSR theory~\cite{tsitsiklis1997lyapunov,rota1960jsr,blondel2005jsr,jungers2009jsr}.
Beyond proving convergence, this framework clarifies the precise geometric
mechanism behind the improved rate. Standard Q-VI error can be decomposed into
a scalar coordinate in the all-ones direction $\one$ and a quotient, or
transverse, component obtained after identifying Q-functions that differ only by
a uniform shift $c\one$. The scalar all-ones coordinate is the reason the ambient
switching family has JSR exactly $\gamma$: every subsystem maps this direction
to itself with eigenvalue $\gamma$. The rank-one residual correction cancels the
autonomous evolution of this scalar coordinate. At the same time, it does not
change the quotient trajectory, because the deflated iterate remains equal to
the corresponding standard Q-VI iterate plus a scalar multiple of $\one$. Since
such a uniform shift does not change statewise action comparisons, the same
greedy-policy sequence is preserved. In this sense, the proposed analysis gives
a sharper geometric understanding of deflated Q-VI than the classical
norm-contraction argument.

The main contributions are summarized as follows.
\begin{enumerate}[(1)]
\item We derive exact switching-system error representations for both standard
Q-VI and the proposed deflated Q-VI. These representations separate the dynamics
along the all-ones direction $\one$ from the transverse, policy-dependent
dynamics on the quotient space.

\item We prove that the rank-one residual correction in Q-VI eliminates the
autonomous scalar dynamics in the all-ones direction. As a result, the switching
system governing deflated Q-VI has JSR equal to the projected JSR $\bar\rho$.
Hence, the relevant convergence rate of the deflated Q-VI dynamics is
characterized by $\bar\rho$, which is no larger than $\gamma$ and may be
strictly smaller. We also show that the uniform residual average can be replaced
by any fixed state-action distributional average: for every fixed admissible
vector $d$, the same quotient JSR is preserved through an oblique-projection
similarity. This fixed-$d$ result is distinct from the time-varying distribution
estimates used in adaptive rank-one or Q-learning schemes~\cite{kolarijani2025rankone}.

\item We identify the structural meaning of the rank-one correction. The
deflated Q-VI iterate differs from the corresponding standard Q-VI iterate only
by a scalar multiple of $\one$. Because the all-ones direction does not affect
the policy ordering of greedy actions, the projected trajectory is unchanged,
and the greedy-policy sequence is preserved relative to standard Q-VI
initialized from the same point. Thus, the rank-one correction should be
interpreted as accelerating Q-function error convergence through re-centering,
rather than as accelerating policy identification; see
Figure~\ref{fig:deflated-qvi-geometry}.
\end{enumerate}

\begin{figure}[t]
\centering
\begin{tikzpicture}[
    >=Stealth,
    scale=0.97,
    every node/.style={font=\scriptsize},
    ordinarytraj/.style={thick,blue!70!black,-{Stealth[length=1.8mm]}},
    defltraj/.style={thick,dashed,orange!85!black,-{Stealth[length=1.8mm]}},
    shiftline/.style={dotted,thick,violet!80!black},
    point/.style={circle,fill=black,inner sep=1.45pt}
]
    \draw[->] (-0.2,0) -- (7.2,0)
        node[right,align=left] {Transverse Error\\[-1mm] \(z=\proj(Q-Q^\star)\)};
    \draw[->] (0,-0.3) -- (0,4.7)
        node[above,align=center] {All-Ones Coordinate\\[-1mm] \(a\one\)};

    \node[point,label=below left:{\(Q^\star\)}] at (0,0) {};

    \draw[ordinarytraj] (6.3,4.0) .. controls (4.8,3.2) and (3.2,2.8) .. (1.5,2.55);
    \draw[ordinarytraj] (1.5,2.55) .. controls (0.78,2.45) and (0.36,2.38) .. (0.12,2.35);
    \draw[ordinarytraj] (0.12,2.35) .. controls (0.05,1.75) and (0.02,0.95) .. (0.01,0.35);
    \node[anchor=west,align=left,text width=3.0cm] at (3.38,3.42)
        {Standard Q-VI};

    \draw[defltraj] (6.3,4.0) -- (6.3,0);
    \draw[defltraj] (6.3,0) .. controls (4.6,0) and (2.4,0) .. (0.04,0);
    \node[anchor=west,align=left,text width=3.1cm] at (3.28,0.65)
        {Deflated Q-VI};

    \draw[shiftline] (6.3,4.0) -- (6.3,0);
    \node[anchor=west,align=left,text width=2.0cm] at (6.45,2.05)
        {Rank-One Residual Shift};
\end{tikzpicture}
\caption{Geometric mechanism of the deflated Q-VI analysis, following
the projected Q-VI geometry of Lee~\cite{lee2026beyond}. Standard Q-VI may
retain a slowly decaying all-ones component. The residual correction preserves
the projected trajectory and re-centers the iterate in the all-ones direction,
so the full switched-family rate is governed by the projected rate
\(\bar\rho\).}
\label{fig:deflated-qvi-geometry}
\end{figure}

\section{Related Work}
\label{sec:related-work}

Rank-one residual corrections for value iteration were studied in
\cite{bertsekas1995rankone} for linear fixed-point iterations. In the
discounted Markovian setting, the all-ones residual correction has the same
basic recentering structure as the deflated Q-VI considered in this paper. More
recently, the rank-one modified value iteration (R1-VI) framework in~\cite{kolarijani2025rankone} used rank-one approximations of transition
dynamics to develop planning and learning algorithms, including Q-function
variants, with convergence guarantees comparable to those of standard VI and
Q-learning. These works provide important algorithmic and approximation
perspectives on rank-one corrections.

Related deflation ideas have also appeared in spectral modifications of value
iteration. For example, deflated Dynamics value iteration~\cite{lee2025ddvi} removes dominant eigenspaces of a transition matrix and obtains convergence rates determined by the remaining spectral components. The closest geometric predecessor to the present work is~\cite{lee2026beyond}, which analyzes standard Q-VI through projected switching dynamics and shows that the corresponding rate governs the action-ordering-relevant component, and therefore policy identification.

The present paper is distinct from these works in both scope and analysis. We analyze the exact switching-system error dynamics of deflated Q-VI in the policy optimization setting. To the best of our knowledge, this provides the first rigorous JSR-based convergence proof for deflated Q-VI in
discounted MDP control. The key conclusion is that the JSR of the
switching system model of deflated Q-VI is exactly the projected JSR
$\bar\rho$, which can be strictly smaller than the classical contraction rate
$\gamma$. At the same time, the deflated Q-VI iterate differs from standard Q-VI
only by an all-ones shift, so the projected trajectory and greedy-policy
sequence are unchanged.

\section{Preliminaries}\label{sec:preliminaries}
\subsection{Notation}
For a finite set \(\mathcal Y\), its cardinality is denoted by \(|\mathcal Y|\).
The symbols \(\R\), \(\R^n\), and \(\R^{n\times m}\) denote the real numbers, the
\(n\)-dimensional Euclidean space, and the set of \(n\times m\) real matrices,
respectively. All vectors are column vectors. For a matrix \(A\), \(A^\top\)
denotes its transpose, and \(\ker(A):=\{x:Ax=0\}\) denotes its nullspace. The
identity matrix is denoted by \(I\), and the vector with all entries equal to
one is denoted by \(\one\). For a linear map \(B\) and a subspace \(W\) such
that \(B(W)\subseteq W\), the notation \(B|_W\) denotes the restriction of \(B\)
to \(W\), namely \(B|_W:W\to W\). For \(m\ge1\), the probability simplex is $\Delta_m:=\left\{p\in\R^m:p_i\ge0,\ \sum_{i=1}^m p_i=1\right\}$.
For a convex set \(\mathcal Y\subset\R^n\) and a vector \(x\in\R^n\), define $\dist_2(x,\mathcal Y):=\inf_{y\in\mathcal Y}\|x-y\|_2$ and $\dist_\infty(x,\mathcal Y):=\inf_{y\in\mathcal Y}\|x-y\|_\infty$.
For \(x\in\R^{|\calS||\calA|}\), its empirical mean and its orthogonal
projection onto \(\operatorname{span}(\one)^\perp\) are defined by
\[
\bar x:=\frac1{|\calS||\calA|}\one^\top x,
\qquad
\proj x:=\left(I-\frac1{|\calS||\calA|}\one\one^\top\right)x.
\]
The same symbol is used for the projection matrix
\[
\proj:=I-\frac1{|\calS||\calA|}\one\one^\top,
\]
which satisfies
\begin{equation}
\label{eq:projection-kills-one}
\proj\one=0.
\end{equation}
The set-valued maximizer is denoted by \(\Argmax\), while \(\argmax\) denotes a
fixed tie-broken single-valued maximizer.

\subsection{Finite discounted Markov decision process}
Let us consider a finite discounted Markov decision process (MDP)~\cite{puterman2014mdp,bertsekas2015dynamic} with finite state space
\(\calS=\{1,\ldots,|\calS|\}\), finite action space
\(\calA=\{1,\ldots,|\calA|\}\), transition probability
\(P(s'\mid s,a)\), reward \(r(s,a,s')\), and discount factor
\(\gamma\in(0,1)\). The expected reward associated with a state-action pair is
$R(s,a):=\sum_{s'\in\calS}P(s'\mid s,a)r(s,a,s')$, for
$(s,a)\in\calS\times\calA$.

A deterministic stationary policy is a map
\(\pi:\calS\to\calA\). More generally, a stochastic stationary policy is a map
\(\mu:\calS\to\Delta_{|\calA|}\), where \(\mu(a\mid s)\) denotes the
probability of selecting action \(a\) in state \(s\). For a deterministic
stationary policy \(\pi\), the associated action-value function is
\[
Q^\pi(s,a)
:=
\mathbb E\left[
\left.
\sum_{t=0}^{\infty}\gamma^t r(s_t,a_t,s_{t+1})
\right|s_0=s,\ a_0=a,\ a_t=\pi(s_t)\ \forall t\ge1
\right],
\]
where the initial action is fixed to be \(a\), and subsequent actions are chosen
according to \(\pi\).

The optimal Q-function and the optimal value function are defined by
\[
Q^\star(s,a):=\sup_\pi Q^\pi(s,a),
\qquad
V^\star(s):=\max_{a\in\calA}Q^\star(s,a).
\]
Equivalently, \(Q^\star\) is the unique fixed point of the Bellman optimality
equation
\[
Q^\star(s,a)
=
R(s,a)
+
\gamma
\sum_{s'\in\calS}P(s'\mid s,a)
\max_{a'\in\calA}Q^\star(s',a').
\]
For each state \(s\), the set of optimal greedy actions is denoted by
\[
\Phi^\star(s)
:=
\Argmax_{a\in\calA}Q^\star(s,a).
\]
The set of all deterministic policies is denoted by $\Theta:=\{\pi:\calS\to\calA\}$.
Throughout the paper,
we use a fixed tie-breaking rule and denote by
\[
\pi_Q(s)
:=
\argmax_{a\in\calA}Q(s,a),
\qquad s\in\calS,
\]
the tie-broken greedy policy induced by \(Q\). Thus \(\pi_Q\in\Theta\) is
well-defined for every \(Q\).

\subsection{Q-value iteration}

The Bellman optimality operator \(F:\R^{|\calS||\calA|}\to\R^{|\calS||\calA|}\)
is defined componentwise by
\[
(FQ)(s,a)
:=
R(s,a)+\gamma\sum_{s'\in\calS}P(s'\mid s,a)
\max_{a'\in\calA}Q(s',a')\quad \forall (s,a) \in {\cal S}\times {\cal A}.
\]
The standard Q-value iteration (Q-VI) is the deterministic recursion
\[
Q_{k+1}=F(Q_k), \qquad k\in\{0,1,2,\ldots\}.
\]
The Bellman operator is a \(\gamma\)-contraction in the infinity
norm~\cite{bertsekas1996neuro,puterman2014mdp}:
\[
\|F(Q)-F(\widetilde Q)\|_\infty
\le
\gamma\|Q-\widetilde Q\|_\infty,
\qquad
\forall Q,\widetilde Q \in {\mathbb R}^{|{\cal S}||{\cal A}|}.
\]
Therefore, $\|Q_k-Q^\star\|_\infty \le \gamma^k\|Q_0-Q^\star\|_\infty$.
The Bellman operator also satisfies the shift identity
\begin{equation}
\label{eq:shift-identity}
F(Q+c\one)=F(Q)+\gamma c\one,
\qquad
\forall Q,
\quad
\forall c\in\R.
\end{equation}
This identity is the source of the all-ones mode, $\one$, in standard Q-VI. If the
current iterate is shifted by a uniform amount, $c\one$, then the Bellman backup maps this shift to the uniform shift $\gamma c\one$. In particular, if \(Q=Q^\star+c\one\), then
\[
F(Q)-Q^\star=\gamma c\one.
\]
Consequently, a pure error of the form \(c\one\) is mapped to \(\gamma c\one\), independently of rewards,
transition probabilities, or the active greedy policy. In the switching system
representation below, this $\one$ becomes a common invariant direction of every
subsystem of the switching system. Since this direction, $\one$, changes all state-action values by the same amount, it does not affect greedy action preferences, but it still appears in the full Q-function error and forces the standard worst-case full-error rate to include the factor \(\gamma\).

The next section rewrites Q-VI as a switching system. This form makes
it possible to separate the all-ones direction, $\one$, from the policy-relevant
transverse directions. To keep the main exposition focused and to make the
logical flow easier to follow, all proofs are deferred to the appendix.

\section{Switching system model of Q-VI}
\label{sec:switching-model}
This section rewrites Q-VI as an affine switching system~\cite{liberzon2003switching,hespanha1999stability,shorten2007stability,lin2009stability}. The affine form keeps
the reward and greedy policy dependent offset explicit, while the homogeneous
linear part is the object whose joint spectral radius determines the worst-case
switched rate.

\subsection{Switching systems and joint spectral radius}

An affine switching system has the form~\cite{liberzon2003switching,hespanha1999stability,shorten2007stability,lin2009stability}
\[
  x_{k+1}=A_{\sigma_k}x_k+b_{\sigma_k},
  \qquad
  \sigma_k\in\mathcal M:=\{1,\ldots,M\},
  \qquad
  k\in\{0,1,2,\ldots\},
\]
where \(\sigma_k\) is a switching signal, \(A_{\sigma_k}\) is the active subsystem selected from the switching family \(\mathcal H:=\{A_1,A_2,\ldots,A_M\}\), and \(b_{\sigma_k}\) is a
mode-dependent affine term. When \(b_{\sigma_k}=0\), the homogeneous part
reduces to a switched linear system,
\[
  x_{k+1}=A_{\sigma_k}x_k,
  \qquad
  k\in\{0,1,2,\ldots\}.
\]
The worst-case exponential rate of the switched linear family $\mathcal H$ is characterized
by the joint spectral radius (JSR)~\cite{tsitsiklis1997lyapunov,rota1960jsr,blondel2005jsr,jungers2009jsr},
defined as follows.
\begin{definition}[Joint spectral radius]\label{def:jsr}
For a bounded set of matrices \({\calH}\subset\R^{m\times m}\), its joint
spectral radius is
\[
\rho({\calH})
:=
\lim_{k\to\infty}
\sup_{A_1,\ldots,A_k\in {\calH}}
\|A_k\cdots A_1\|^{1/k},
\]
where \(\|\cdot\|\) denotes any fixed submultiplicative matrix norm. The value is
independent of the chosen submultiplicative norm~\cite{rota1960jsr,jungers2009jsr}.
\end{definition}

When \({\calH}\) is finite, the supremum for each fixed product length is a
maximum over products generated by matrices in \({\calH}\). Thus, for fixed
\(k\), one may write \(\max_{A_1,\ldots,A_k\in\calH}\) instead of
\(\sup_{A_1,\ldots,A_k\in\calH}\). For a finite family \(\calH\), the notation
\(\rho(\co({\calH}))\) means the JSR computed when each factor in a product is
allowed to be any convex combination of matrices in \({\calH}\).
The next three lemmas collect standard JSR facts used throughout the switching
analysis.
\begin{lemma}[Convexification invariance of the JSR~{\cite[Prop.~1.8]{jungers2009jsr}}]
\label{lem:jsr-convexification}
For every finite matrix family \(\calH\), the following identity holds:
\[
\rho(\co(\calH))=\rho(\calH).
\]
\end{lemma}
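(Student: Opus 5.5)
The plan is to prove the two inequalities \(\rho(\calH)\le\rho(\co(\calH))\) and \(\rho(\co(\calH))\le\rho(\calH)\) separately. The first is immediate: \(\calH\subseteq\co(\calH)\), so for every \(k\) the supremum over products of length \(k\) with factors in \(\co(\calH)\) dominates the maximum over products with factors in \(\calH\). Taking \(k\)-th roots and the limit in Definition~\ref{def:jsr} gives the inequality. Note that \(\co(\calH)\) is bounded because \(\calH\) is finite, so its JSR is well defined.

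For the reverse inequality, I will expand products multilinearly. Let \(B_i=\sum_{j}\lambda_{i,j}A_j\) with \(\lambda_{i,\cdot}\in\Delta_M\) for \(i=1,\ldots,k\). Then
\[
B_k\cdots B_1=\sum_{j_1,\ldots,j_k}\Bigl(\prod_{i=1}^k\lambda_{i,j_i}\Bigr)A_{j_k}\cdots A_{j_1}.
\]
The weights are nonnegative and sum to one, so this is a convex combination of length-\(k\) products from \(\calH\). By the triangle inequality for the norm,
\[
\|B_k\cdots B_1\|\le\max_{A_{j_1},\ldots,A_{j_k}\in\calH}\|A_{j_k}\cdots A_{j_1}\|.
\]
The right-hand side does not depend on the choice of the \(B_i\). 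Taking the supremum over \(B_1,\ldots,B_k\in\co(\calH)\), then \(k\)-th roots and the limit, yields \(\rho(\co(\calH))\le\rho(\calH)\).

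The only point needing care is that the limit in Definition~\ref{def:jsr} exists for both families, so that comparing the \(k\)-th terms suffices. Existence follows from Fekete's lemma, since \(a_k:=\log\sup\|\text{products of length }k\|\) is subadditive by submultiplicativity. If one prefers to avoid this, the comparison can be done termwise with \(\limsup\), and the same argument gives equality of the limsups.

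I expect no real obstacle. The key step is the multilinear expansion together with convexity of the norm, and I would present exactly that.
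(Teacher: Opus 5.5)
Your proposal is correct and matches the paper's proof: the inclusion \(\calH\subseteq\co(\calH)\) gives one inequality, and expanding \(B_k\cdots B_1\) into a convex combination of length-\(k\) products from \(\calH\), together with convexity of the norm, gives the reverse. Your use of Fekete's lemma (subadditivity of the logarithm of the length-\(k\) supremum) to justify that the limit in Definition~\ref{def:jsr} exists is a small addition that the paper leaves implicit.
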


\noindent This result appears in~\cite[Prop.~1.8]{jungers2009jsr}.
\begin{lemma}[Uniform exponential product bound]
\label{lem:jsr-product-bound}
Let \(\|\cdot\|\) be any fixed submultiplicative matrix norm. If
\(\rho({\calH})<1\), then every switched product is uniformly exponentially
stable in that norm: for every \(\varepsilon\in(0,1-\rho(\calH))\), with
\[
\beta_\varepsilon:=\rho(\calH)+\varepsilon,
\]
there exists \(C_{\beta_\varepsilon}>0\), depending on
\(\beta_\varepsilon\) and on the chosen norm, such that for every \(k\ge0\) and
every switching sequence \(\sigma_0,\ldots,\sigma_{k-1} \in {\mathcal M}\) with
\[
A_j:=A_{\sigma_j}\in\calH,
\qquad j=0,\ldots,k-1,
\]
one has
\[
\left\|A_{k-1}\cdots A_0\right\|
\le
C_{\beta_\varepsilon}\beta_\varepsilon^k,
\]
where the product is interpreted as the identity matrix $I$ when \(k=0\).
\end{lemma}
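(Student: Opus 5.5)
The plan is to derive the bound directly from Definition~\ref{def:jsr}, using the fact that the defining limit exists. Write
\[
\rho_k:=\sup_{A_1,\ldots,A_k\in\calH}\|A_k\cdots A_1\|,
\]
so that \(\rho(\calH)=\lim_{k\to\infty}\rho_k^{1/k}\). Boundedness of \(\calH\) makes each \(\rho_k\) finite, since \(\rho_k\le(\sup_{A\in\calH}\|A\|)^k\) by submultiplicativity.

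Fix \(\varepsilon\in(0,1-\rho(\calH))\) and set \(\beta_\varepsilon=\rho(\calH)+\varepsilon\). The hypothesis \(\rho(\calH)<1\) only ensures \(\beta_\varepsilon<1\); it is not otherwise needed. Since \(\rho_k^{1/k}\to\rho(\calH)<\beta_\varepsilon\), there is an index \(K\) such that \(\rho_k\le\beta_\varepsilon^k\) for all \(k\ge K\). For the finitely many indices \(0\le k<K\), define \(\rho_0:=\|I\|\) to cover the empty product, and bound the ratio \(\rho_k/\beta_\varepsilon^k\) by a finite constant. This is legitimate because \(\beta_\varepsilon>0\).

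Then set
\[
C_{\beta_\varepsilon}:=\max\Bigl\{1,\ \max_{0\le k<K}\rho_k\beta_\varepsilon^{-k}\Bigr\},
\]
which is finite and positive. For any switching sequence \(\sigma_0,\ldots,\sigma_{k-1}\), every factor \(A_j=A_{\sigma_j}\) lies in \(\calH\). Hence
\[
\|A_{k-1}\cdots A_0\|\le\rho_k\le C_{\beta_\varepsilon}\beta_\varepsilon^k,
\]
and the bound is uniform over all switching sequences. The constant depends only on \(\beta_\varepsilon\) and on the chosen norm, through the values \(\rho_k\).

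The main point needing care is existence of the limit in Definition~\ref{def:jsr}. I would justify it by Fekete's lemma: submultiplicativity gives \(\rho_{m+n}\le\rho_m\rho_n\), so \(\log\rho_k\) is subadditive and \(\rho_k^{1/k}\) converges to \(\inf_k\rho_k^{1/k}\). If some \(\rho_k=0\), all longer products vanish and the bound is trivial. Even without this argument, the definition as stated presupposes the limit, so a \(\limsup\) version of the same reasoning would suffice. Independence of the chosen norm is not needed, because the norm is fixed throughout.
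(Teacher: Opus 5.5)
Your proposal is correct and follows the paper's proof essentially verbatim: you pick a threshold beyond which the supremum over length-$k$ products is at most $\beta_\varepsilon^k$, and absorb the finitely many shorter lengths into $C_{\beta_\varepsilon}:=\max\{1,\max_{k<K}\rho_k\beta_\varepsilon^{-k}\}$. Your choice $\rho_0:=\|I\|$ is slightly more careful than the paper's $M_0:=1$, which tacitly assumes $\|I\|=1$ at $k=0$ (false for, e.g., the Frobenius norm), and your Fekete remark justifies the existence of the limit that the paper simply takes from the definition.
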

The following lemma is given in~\cite[Eq.~(34)]{cicone2015note}.
\begin{lemma}[Block upper-triangular JSR~{\cite[Eq.~(34)]{cicone2015note}}]
\label{lem:block-upper-triangular-jsr}
Let
\[
\mathcal M:=
\left\{
\begin{bmatrix}
B_i & C_i\\
0 & D_i
\end{bmatrix}:i\in\mathcal I
\right\}
\]
be a bounded family of block upper-triangular matrices. Let
\(\mathcal B:=\{B_i:i\in\mathcal I\}\) and
\(\mathcal D:=\{D_i:i\in\mathcal I\}\). Then
\[
\rho(\mathcal M)=\max\{\rho(\mathcal B),\rho(\mathcal D)\}.
\]
\end{lemma}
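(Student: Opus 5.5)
The statement is Lemma~\ref{lem:block-upper-triangular-jsr}, which is cited from~\cite[Eq.~(34)]{cicone2015note}. I would still prove it directly by showing two inequalities: a lower bound $\rho(\mathcal M)\ge\max\{\rho(\mathcal B),\rho(\mathcal D)\}$ and a matching upper bound.

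The lower bound comes from the block structure of products. For any indices $i_1,\ldots,i_k\in\mathcal I$, the product $M_{i_k}\cdots M_{i_1}$ is again block upper-triangular. Its diagonal blocks are exactly $B_{i_k}\cdots B_{i_1}$ and $D_{i_k}\cdots D_{i_1}$. Its off-diagonal block is
\[
\sum_{j=1}^{k} B_{i_k}\cdots B_{i_{j+1}}\,C_{i_j}\,D_{i_{j-1}}\cdots D_{i_1},
\]
which follows by induction on $k$. Choose a block-compatible norm, for instance the operator norm induced by $\|(x,y)\|:=\max\{\|x\|,\|y\|\}$. Under this norm each diagonal block of a product has norm at most the norm of the whole product. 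Taking the supremum over products of length $k$, then $k$-th roots, and letting $k\to\infty$ gives the lower bound. This uses Definition~\ref{def:jsr} together with norm independence.

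For the upper bound, set $r:=\max\{\rho(\mathcal B),\rho(\mathcal D)\}$ and fix $\varepsilon>0$. Lemma~\ref{lem:jsr-product-bound} cannot be applied directly, because it assumes $\rho<1$. I would rescale the families by $(r+\varepsilon)^{-1}$, or equivalently use the finite-$k$ definition, to get a constant $C_\varepsilon$ such that every product of length $n$ from $\mathcal B$ or from $\mathcal D$ has norm at most $C_\varepsilon(r+\varepsilon)^n$. Since the family is bounded, $\|C_i\|\le K$ for all $i$. Applying these bounds to the off-diagonal sum gives
\[
\Bigl\|\sum_{j=1}^{k}\cdots\Bigr\|\le k\,K\,C_\varepsilon^2\,(r+\varepsilon)^{k-1}.
\]
Hence the norm of a length-$k$ product of $\mathcal M$ is at most $\mathrm{poly}(k)\,C'_\varepsilon\,(r+\varepsilon)^k$. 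Taking $k$-th roots makes the polynomial factor tend to one, so $\rho(\mathcal M)\le r+\varepsilon$ for every $\varepsilon>0$.

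The main obstacle is controlling the cross-term sum uniformly over all switching sequences. The point is that the uniform constants $C_\varepsilon$ for $\mathcal B$ and $\mathcal D$ must hold for every product, not just one product at a time. Once they are in hand, the linear-in-$k$ count of terms is harmless after taking $k$-th roots.
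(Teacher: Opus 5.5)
Your proposal is correct and follows essentially the same route as the paper's proof. The paper also uses the block-compatible max-norm for the lower bound. For the upper bound it also rescales $\mathcal B$ and $\mathcal D$ by $q^{-1}$ with $q>\max\{\rho(\mathcal B),\rho(\mathcal D)\}$ to obtain uniform product constants, then bounds the $k$-term cross sum by a constant times $k\,q^{k-1}$, using $\sup_i\|C_i\|<\infty$, so the linear-in-$k$ factor disappears after taking $k$-th roots.
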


The preceding lemmas provide the technical JSR toolkit. The next subsection
returns to the Q-VI model and writes its Bellman update in vectorized
switched-system form.

\subsection{Vectorized representation}

Each Q-function is identified with a vector \(Q\in\R^{|\calS||\calA|}\) whose
entries enumerate \(Q(s,a)\). The vectorized state-action transition notation,
the use of stochastic policies to represent the Bellman maximization error, and
the projected switching viewpoint used in this subsection are adapted
from~\cite{lee2026qlearning} and~\cite{lee2026beyond}. We use the compact notation
\[
P:=
\begin{bmatrix}
P_1\\ \vdots\\ P_{|\calA|}
\end{bmatrix}
\in\R^{(|\calS|\,|\calA|)\times |\calS|},
\qquad
R:=
\begin{bmatrix}
R(\cdot,1)\\ \vdots\\ R(\cdot,|\calA|)
\end{bmatrix}
\in\R^{|\calS|\,|\calA|},
\]
where \(P_a=P(\cdot\mid\cdot,a)\in\R^{|\calS|\times |\calS|}\),
\(a\in\calA\). For any stochastic policy
\(\mu:\calS\to\Delta_{|\calA|}\), define
\[
\Pi^\mu
:=
\begin{bmatrix}
\mu(1)^\top\otimes e_1^\top\\
\mu(2)^\top\otimes e_2^\top\\
\vdots\\
\mu(|\calS|)^\top\otimes e_{|\calS|}^\top
\end{bmatrix}
\in\R^{|\calS|\times(|\calS|\,|\calA|)}.
\]
Then \(P\Pi^\mu\in\R^{(|\calS|\,|\calA|)\times(|\calS|\,|\calA|)}\) is the
state-action transition matrix under \(\mu\). For a deterministic policy
\(\pi\in\Theta\), the same notation \(\Pi^\pi\) is used by identifying
\(\pi(s)\) with its one-hot encoding for each \(s\in\calS\). Thus
\(\Pi^\pi\) is obtained by using the one-hot vector corresponding to
\(\pi(s)\) in each row. For \(Q\in\R^{|\calS||\calA|}\), let us define
\[
\Pi_Q:=\Pi^{\pi_Q},
\]
where $\pi_Q$ is the tie-broken greedy policy induced by \(Q\)
\[
\pi_Q(s):=\argmax_{a\in\calA}Q(s,a),
\qquad s\in\calS.
\]
With this notation, the Bellman operator can be written as
\[
F(Q)=R+\gamma P\Pi_Q Q.
\]

\subsection{Affine switching-system representation}

For each deterministic policy \(\pi\in\Theta\), define the matrix
\[
A_\pi:=\gamma P\Pi^\pi
\in {\mathbb R}^{|{\cal S}||{\cal A}|\times |{\cal S}||{\cal A}|}.
\]
Then Q-VI can be written as the affine switching system
\[
Q_{k+1}=R+A_{\pi_{Q_k}}Q_k,
\qquad
k\in\{0,1,2,\ldots\},
\]
where the affine term is the reward vector \(R\), and the active subsystem is
selected by the tie-broken greedy policy of the current iterate. In error
coordinates \(e_k:=Q_k-Q^\star\), the same recursion becomes
\[
e_{k+1}
=
A_{\pi_{Q_k}}e_k
+
\gamma P(\Pi^{\pi_{Q_k}}-\Pi^{\pi_{Q^\star}})Q^\star,
\qquad
k\in\{0,1,2,\ldots\}.
\]
where \(\pi_{Q^\star}\) is any tie-broken greedy policy at \(Q^\star\). The
linear part of the full deterministic switching family is
\[
\calH:=\{A_\pi:\pi\in\Theta\}.
\]
The following proposition establishes the exact JSR of this full linear family.
\begin{proposition}[Full Q-VI switching JSR]
\label{prop:qvi-full-jsr}
The deterministic switching family \(\calH=\{A_\pi:\pi\in\Theta\}\) satisfies
\[
\rho(\calH)=\gamma.
\]
\end{proposition}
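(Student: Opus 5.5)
The plan is to prove the two inequalities \(\rho(\calH)\le\gamma\) and \(\rho(\calH)\ge\gamma\) separately, working throughout in the induced infinity norm \(\|\cdot\|_\infty\). This norm is submultiplicative, so by Definition~\ref{def:jsr} it may be used to compute the JSR.

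For the upper bound, I will first note a structural fact about each subsystem. For every \(\pi\in\Theta\), the matrix \(P\Pi^\pi\) is row-stochastic: it is entrywise nonnegative, and each row of \(P\) and of \(\Pi^\pi\) sums to one. Hence \(P\Pi^\pi\one=P\one=\one\), and therefore \(\|P\Pi^\pi\|_\infty=1\). It follows that \(\|A_\pi\|_\infty=\gamma\) for every \(\pi\). By submultiplicativity, any product \(A_{\pi_k}\cdots A_{\pi_1}\) has norm at most \(\gamma^k\). Taking \(k\)-th roots and passing to the limit in Definition~\ref{def:jsr} gives \(\rho(\calH)\le\gamma\).

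For the lower bound, I will use the all-ones vector as a common eigenvector. The same computation gives \(A_\pi\one=\gamma\one\) for every \(\pi\), which is the matrix form of the shift identity~\eqref{eq:shift-identity}. Consequently every product of length \(k\) satisfies \(A_{\pi_k}\cdots A_{\pi_1}\one=\gamma^k\one\). Since \(\|\one\|_\infty=1\), this yields \(\|A_{\pi_k}\cdots A_{\pi_1}\|_\infty\ge\gamma^k\), so the supremum over products of length \(k\) is at least \(\gamma^k\). Taking \(k\)-th roots gives \(\rho(\calH)\ge\gamma\). An equivalent route is the standard bound \(\rho(\calH)\ge\rho(A_\pi)\ge\gamma\), since \(\gamma\) is an eigenvalue of each \(A_\pi\). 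I prefer the direct product argument because it uses only Definition~\ref{def:jsr}.

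There is no real obstacle in this argument. The only point that needs care is checking that \(P\Pi^\pi\) is row-stochastic from the block definitions. Each row of \(\Pi^\pi\) is the Kronecker product \(e_{\pi(s)}^\top\otimes e_s^\top\), a one-hot vector, so \(\Pi^\pi\one=\one\in\R^{|\calS|}\). Each row of \(P\) is a probability distribution over next states, so \(P\one=\one\). Combining these gives \(P\Pi^\pi\one=\one\), and both factors are nonnegative.
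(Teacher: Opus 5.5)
Your proposal is correct and follows essentially the same argument as the paper. The upper bound comes from row-stochasticity of $P\Pi^\pi$, which gives $\|A_\pi\|_\infty=\gamma$ and hence products of norm at most $\gamma^k$; the lower bound comes from the common eigenrelation $A_\pi\one=\gamma\one$, which gives $\|A_{\pi_k}\cdots A_{\pi_1}\|_\infty\ge\gamma^k$ for every switching word.
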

Standard Q-VI contains an unavoidable all-ones mode with
eigenvalue \(\gamma\). This observation explains why the usual full-error
convergence bound can be conservative for policy identification and motivates
the projected representation and residual recentering studied below.

\section{Exact switching system}
\label{sec:exact-switching-system}
This section gives the exact stochastic-policy switching representation of the
Bellman optimality error dynamics. Let
\[
\mathcal M_{\rm st}:=\{\mu:\calS\to\Delta_{|\calA|}\}
\]
be the set of stochastic stationary policies. For a stochastic policy
\(\mu\in\mathcal M_{\rm st}\), define the matrix
\[
A_\mu:=\gamma P\Pi^\mu,
\]
which satisfies
\begin{equation}
\label{eq:common-ones-eigenrelation}
A_\mu\one=\gamma\one,
\qquad
\forall \mu\in\mathcal M_{\rm st}.
\end{equation}
This identity follows directly from the stochasticity of the policy and the
transition kernel. The matrices \(A_\mu\) will be the subsystems of the
stochastic-policy switching representation introduced below. Thus,
\cref{eq:common-ones-eigenrelation} implies that the all-ones direction is a
common invariant direction of every stochastic-policy subsystem, with eigenvalue
\(\gamma\).

The following representation uses the stochastic-policy linearization of the
Bellman max that appears in~\cite{lee2026qlearning} and~\cite[Lemma~5]{lee2026beyond}.
The broader viewpoint of analyzing value iteration through affine or switching
dynamical systems is also close to the first-order approach of~\cite{goyal2023firstorder}.
\begin{lemma}[Exact stochastic-policy error representation]
\label{lem:exact-representation}
For every \(Q\in\R^{|\calS||\calA|}\), there exists a stochastic policy
\(\mu_Q:\calS\to\Delta_{|\calA|}\) such that
\[
F(Q)-Q^\star
=
A_{\mu_Q}(Q-Q^\star).
\]
Consequently, with \(e:=Q-Q^\star\),
\[
F(Q)-Q=(A_{\mu_Q}-I)e.
\]
In particular, the statement applies to standard Q-VI iterates and to the
deflated Q-VI iterates studied below.
\end{lemma}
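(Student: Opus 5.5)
The plan is to reduce the matrix identity to a scalar interpolation statement at each next state. By the Bellman equation, $Q^\star=R+\gamma P\Pi_{Q^\star}Q^\star$, and $F(Q)=R+\gamma P\Pi_Q Q$. Subtracting gives
\[
F(Q)-Q^\star=\gamma P\bigl(\Pi_Q Q-\Pi_{Q^\star}Q^\star\bigr).
\]
The vector $\Pi_Q Q-\Pi_{Q^\star}Q^\star\in\R^{|\calS|}$ has entries $\max_a Q(s,a)-\max_a Q^\star(s,a)$. It therefore suffices to build, for each state $s$, a distribution $\mu_Q(s)\in\Delta_{|\calA|}$ with
\[
\max_a Q(s,a)-\max_a Q^\star(s,a)=\sum_a \mu_Q(a\mid s)\bigl(Q(s,a)-Q^\star(s,a)\bigr),
\]
because the right-hand side is exactly the $s$-th entry of $\Pi^{\mu_Q}(Q-Q^\star)$. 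Multiplying by $\gamma P$ then gives $F(Q)-Q^\star=A_{\mu_Q}(Q-Q^\star)$.

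For the scalar step, fix $s$ and let $a_1:=\pi_Q(s)$ and $a_2:=\pi_{Q^\star}(s)$. Write $e(s,a):=Q(s,a)-Q^\star(s,a)$ and $\delta:=\max_a Q(s,a)-\max_a Q^\star(s,a)$. Optimality of each greedy action gives the sandwich
\[
e(s,a_2)=Q(s,a_2)-Q^\star(s,a_2)\le \delta\le Q(s,a_1)-Q^\star(s,a_1)=e(s,a_1).
\]
So $\delta$ lies in the segment $[e(s,a_2),e(s,a_1)]$. We can then take $\mu_Q(s)=\lambda e_{a_1}+(1-\lambda)e_{a_2}$ for a suitable $\lambda\in[0,1]$. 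If $e(s,a_1)=e(s,a_2)$, any $\lambda$ works; take $\lambda=1$. Otherwise set $\lambda=(\delta-e(s,a_2))/(e(s,a_1)-e(s,a_2))$. If $a_1=a_2$, the point mass works trivially. Either way $\mu_Q(s)$ is a valid probability vector and reproduces $\delta$. The main obstacle is just this sandwich inequality, and its only subtlety is using the correct greedy action on each side.

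The second identity follows at once:
\[
F(Q)-Q=(F(Q)-Q^\star)-(Q-Q^\star)=(A_{\mu_Q}-I)e.
\]
Since $Q$ was arbitrary, the statement applies to any iterate, standard or deflated. Finally, I would note that $\mu_Q$ depends on $Q$ only through $Q$ itself, which is harmless because the lemma only asserts existence.
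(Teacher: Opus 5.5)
Your proposal is correct and follows the paper's proof essentially step for step. The paper likewise reduces $F(Q)-Q^\star$ to the statewise differences $\max_a Q(s,a)-\max_a Q^\star(s,a)$, sandwiches each between the action errors at a maximizer of $Q^\star(s,\cdot)$ and a maximizer of $Q(s,\cdot)$, and writes it as a convex combination of those two errors; your explicit formula for $\lambda$ and your handling of the degenerate cases only make that last step more concrete.
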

For standard Q-VI, let \(Q_{k+1}=F(Q_k)\),
\(e_k:=Q_k-Q^\star\), and let \(\mu_k:=\mu_{Q_k}\) be a stochastic
policy supplied by~\cref{lem:exact-representation}. Then,~\cref{lem:exact-representation} implies that standard Q-VI admits
the homogeneous stochastic-policy switching system representation
\begin{equation}
\label{eq:qvi-stochastic-switching-system}
e_{k+1}=A_{\mu_k}e_k,
\qquad
A_{\mu_k}\in\calH_{\rm st}
:=\{A_\mu:\mu:\calS\to\Delta_{|\calA|}\},
\qquad
k\in\{0,1,2,\ldots\}.
\end{equation}
Equivalently, \(Q_{k+1}=Q^\star+A_{\mu_k}(Q_k-Q^\star)\). The switching
signal is selected by the Bellman max-linearization at the current iterate; that
is, the realized nonlinear trajectory uses \(\mu_k=\mu_{Q_k}\). Thus
\cref{eq:qvi-stochastic-switching-system} is an exact trajectory-wise
representation, while \(\calH_{\rm st}\) is the associated enlarged
stochastic-policy family used for switched-family rate bounds. Its deterministic
subfamily, namely the subset of $\calH_{\rm st}$ corresponding to all deterministic policies, is \(\calH=\{A_\pi:\pi\in\Theta\}\).

\section{Projected switching system for standard Q-VI}
\label{sec:projected-switching}

In this section, we introduce the projected switching system associated with
the Q-VI error dynamics. Starting from the switching system representation
developed in the previous section, we project the dynamics onto the zero-sum transverse subspace
\[
\operatorname{span}(\one)^\perp
=
\{x\in\R^{|\calS||\calA|}:\one^\top x=0\}.
\]
This projection removes the all-ones
component and isolates the transverse dynamics. The resulting projected
switching system captures the part of the Q-function error that is relevant to
greedy action preferences, and it will play a central role in interpreting the
convergence behavior of deflated Q-VI.

The reason for focusing on \(\operatorname{span}(\one)^\perp\) is geometric.
Adding a scalar multiple \(c\one\) to a Q-function shifts all state-action
values by the same constant. Therefore, for each fixed state, all action values
are shifted equally, and neither the statewise action ordering nor the
tie-broken greedy action is changed. Consequently, the component of the error
\(e_k\) along the all-ones direction affects only the absolute numerical level
of the Q-function, but not the policy selected from it. Therefore, the action-ordering-relevant part of the error is the residual component
obtained after removing the uniform all-ones shift. Equivalently, the projected
error on \(\operatorname{span}(\one)^\perp\) represents the transverse part of
the dynamics that can influence greedy decisions. This viewpoint provides the
geometric foundation for the JSR-based convergence analysis of deflated Q-VI
developed below.
To track this action-ordering-relevant part, define the projected subsystem
\[
\bar A_\mu:=\proj A_\mu\proj.
\]
The projection before \(A_\mu\) chooses the zero-mean representative of the current error, and the projection after \(A_\mu\) removes
any new uniform shift generated by the subsystem. Therefore, \(\bar A_\mu\) keeps
exactly the transverse dynamics that matter for greedy action comparisons and
discards the uninformative all-ones coordinate.
Formally, the associated projected switching system is
\begin{equation}
\label{eq:projected-switching-system}
z_{k+1}=\bar A_{\mu_k}z_k,
\qquad
z_k\in\operatorname{span}(\one)^\perp,
\qquad
\bar A_{\mu_k}\in\bar{\calH}_{\rm st}
:=\{\bar A_\mu:\mu:\calS\to\Delta_{|\calA|}\},
\qquad
k\in\{0,1,2,\ldots\}.
\end{equation}
Every element of \(\bar{\calH}_{\rm st}\) is regarded as a linear map on
\(\operatorname{span}(\one)^\perp\). The corresponding projected family with policies restricted to deterministic policies is
\[
\bar{\calH}:=\{\bar A_\pi:\pi\in\Theta\}.
\]
\begin{definition}[Projected joint spectral radius]\label{def:projected-jsr}
The projected JSR of the deterministic projected family \(\bar{\calH}\) is
\[
\bar\rho
:=\rho(\bar{\calH})
:=
\lim_{k\to\infty}
\max_{\pi_0,\ldots,\pi_{k-1}\in\Theta}
\left\|
\bar A_{\pi_{k-1}}
\cdots
\bar A_{\pi_0}
\right\|^{1/k}.
\]
\end{definition}
Equivalently, \(\bar\rho\) is the
worst-case exponential rate of products of the projected matrices.

\begin{lemma}[Empirical orthogonal error decomposition]
\label{lem:empirical-orthogonal-error-decomposition}
For any error vector
\(e_k\in\R^{|\calS||\calA|}\), define \(z_k\) and \(a_k\) as
\begin{equation}
\label{eq:empirical-error-decomposition}
z_k:=\proj e_k,
\qquad
a_k:=\frac1{|\calS||\calA|}\one^\top e_k.
\end{equation}
Then \(z_k\in \operatorname{span}(\one)^\perp\), and
\[
e_k=a_k\one+z_k = a_k\one+\proj e_k.
\]
Moreover, this decomposition is unique: if
\(e_k=\alpha\one+w\) with \(w\in \operatorname{span}(\one)^\perp\), then
\(\alpha=a_k\) and \(w=z_k\).
\end{lemma}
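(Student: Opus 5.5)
The plan is a direct computation with the projection matrix $\proj$, followed by a uniqueness argument based on the orthogonal splitting $\R^{|\calS||\calA|}=\operatorname{span}(\one)\oplus\operatorname{span}(\one)^\perp$. Write $n:=|\calS||\calA|$, so that $\one^\top\one=n$.

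First I show that $z_k\in\operatorname{span}(\one)^\perp$. By definition of $\proj$,
\[
\one^\top z_k=\one^\top e_k-\tfrac1n(\one^\top\one)(\one^\top e_k)=0 .
\]
Next I verify the decomposition itself:
\[
z_k=e_k-\tfrac1n\one\one^\top e_k=e_k-a_k\one,
\]
which rearranges to $e_k=a_k\one+z_k=a_k\one+\proj e_k$.

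For uniqueness, suppose $e_k=\alpha\one+w$ with $\one^\top w=0$. Multiplying on the left by $\one^\top$ gives $\one^\top e_k=\alpha n$, hence $\alpha=a_k$. Then $w=e_k-a_k\one=z_k$.

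Alternatively, I could apply $\proj$ to the decomposition and use \cref{eq:projection-kills-one} together with $\proj w=w$ for $w\perp\one$. The latter holds because $\proj w=w-\tfrac1n\one(\one^\top w)=w$.

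No step poses a real obstacle. The only point needing care is the normalization $\one^\top\one=n$, which is what makes the coefficient $a_k$ equal to the empirical mean $\bar e_k$ rather than some other multiple of $\one^\top e_k$.
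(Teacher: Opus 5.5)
Your proposal is correct and takes essentially the same approach as the paper: you write $z_k=e_k-a_k\one$, check $\one^\top z_k=0$ using $\one^\top\one=|\calS||\calA|$, and get uniqueness by applying the averaging functional to $e_k=\alpha\one+w$. The paper does the same three computations, only in a slightly different order, so nothing is missing.
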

In the above lemma, \(z_k\) is the transverse component of the error and is
relevant to action orderings. The scalar \(a_k\) measures the magnitude of the
uniform all-ones component. Therefore, the decomposition in~\eqref{eq:empirical-error-decomposition} separates the
error into its policy-relevant projected part \(z_k\) and its policy-irrelevant
recentering part \(a_k\one\).

\begin{lemma}[Projected error recursion for standard Q-VI]
\label{lem:ordinary-projected-error-recursion}
The standard Q-VI recursion satisfies
\[
z_{k+1}=\bar A_{\mu_k}z_k,
\qquad
k\in\{0,1,2,\ldots\}.
\]
\end{lemma}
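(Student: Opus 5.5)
The plan is to start from the homogeneous stochastic-policy representation \eqref{eq:qvi-stochastic-switching-system}, namely $e_{k+1}=A_{\mu_k}e_k$. This representation is supplied by \cref{lem:exact-representation} with $\mu_k=\mu_{Q_k}$. We then apply $\proj$ to both sides and show that the all-ones component of $e_k$ drops out.

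\textbf{Step 1: insert the decomposition.} By \cref{lem:empirical-orthogonal-error-decomposition}, write $e_k=a_k\one+z_k$ with $z_k=\proj e_k\in\operatorname{span}(\one)^\perp$. Substituting into the recursion gives
\[
z_{k+1}=\proj e_{k+1}=\proj A_{\mu_k}(a_k\one+z_k)=a_k\proj A_{\mu_k}\one+\proj A_{\mu_k}z_k .
\]

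\textbf{Step 2: kill the all-ones term.} The common eigenrelation \eqref{eq:common-ones-eigenrelation} gives $A_{\mu_k}\one=\gamma\one$. Combined with \eqref{eq:projection-kills-one}, this yields $\proj A_{\mu_k}\one=\gamma\proj\one=0$, so the first term vanishes.

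\textbf{Step 3: rewrite the remaining term.} Since $\proj$ is the orthogonal projector onto $\operatorname{span}(\one)^\perp$ and $z_k$ lies in that subspace, we have $\proj z_k=z_k$; this follows from idempotence, $\proj^2=\proj$, which is a one-line check using $\one^\top\one=|\calS||\calA|$. Therefore
\[
\proj A_{\mu_k}z_k=\proj A_{\mu_k}\proj z_k=\bar A_{\mu_k}z_k,
\]
which is the claimed recursion.

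There is no real obstacle here. The only points needing care are verifying $\proj^2=\proj$ and noting that $\mu_k$ is the same trajectory-dependent stochastic policy appearing in \eqref{eq:qvi-stochastic-switching-system}. Consequently $\bar A_{\mu_k}\in\bar\calH_{\rm st}$, and the recursion is exact rather than merely a bound.
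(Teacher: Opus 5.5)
Your proposal is correct and matches the paper's proof essentially step for step. Both arguments apply $\proj$ to $e_{k+1}=A_{\mu_k}e_k$, remove $a_k\proj A_{\mu_k}\one$ using $A_{\mu_k}\one=\gamma\one$ and $\proj\one=0$, and then use $\proj z_k=z_k$ to write $\proj A_{\mu_k}z_k=\proj A_{\mu_k}\proj z_k=\bar A_{\mu_k}z_k$.
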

The next lemma gives the standard JSR bound for the projected switching system.
This bound identifies the transverse rate that the deflated Q-VI update will
later inherit as its associated switched-family JSR.
\begin{lemma}[Projected JSR bound~\cite{lee2026beyond}]
\label{lem:restricted-jsr-bound}
The projected JSR satisfies
\[
\bar\rho\le\gamma.
\]
\end{lemma}

This result follows from the projected-product argument in~\cite[Lemma~8]{lee2026beyond}.
The inequality can be strict when the state-action dynamics mix uniformly in
directions transverse to \(\operatorname{span}(\one)\). This projected
representation isolates the faster transverse dynamics from the slower all-ones
mode. Before introducing the deflated Q-VI update, we present the matching sharpness
of this slow all-ones mode for standard Q-VI.
\begin{proposition}[Sharpness of the classical rate for standard Q-VI]
\label{prop:ordinary-sharp}
For standard Q-VI, no uniform global convergence estimate to \(Q^\star\) can
have a rate smaller than \(\gamma\). In particular, if
\[
Q_0=Q^\star+c\one,
\qquad c\neq0,
\]
then standard Q-VI satisfies
\[
Q_k-Q^\star=\gamma^k c\one,
\qquad
\forall k\ge0.
\]
\end{proposition}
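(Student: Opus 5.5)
The proof is an induction on $k$ driven by the shift identity \cref{eq:shift-identity} together with the fixed-point property $F(Q^\star)=Q^\star$. The base case $k=0$ is the assumption $Q_0-Q^\star=c\one$. For the inductive step, suppose $Q_k=Q^\star+\gamma^k c\one$. Applying \cref{eq:shift-identity} with $Q=Q^\star$ and shift $\gamma^k c$ gives
\[
Q_{k+1}=F(Q^\star+\gamma^k c\one)=F(Q^\star)+\gamma\cdot\gamma^k c\one=Q^\star+\gamma^{k+1}c\one .
\]
This closes the induction and proves the displayed identity $Q_k-Q^\star=\gamma^k c\one$ for all $k\ge0$.

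As a cross-check, the same recursion follows from the switching representation. Lemma~\ref{lem:exact-representation} gives $e_{k+1}=A_{\mu_k}e_k$, and \cref{eq:common-ones-eigenrelation} gives $A_{\mu}\one=\gamma\one$ for every admissible $\mu$. So if $e_k$ is a multiple of $\one$, then $e_{k+1}=\gamma e_k$ no matter which stochastic policy the max-linearization selects. Either argument is enough; the first avoids any dependence on how the switching signal is chosen.

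It remains to turn the explicit trajectory into the rate claim. Suppose some estimate $\|Q_k-Q^\star\|\le C\beta^k\|Q_0-Q^\star\|$ held for all initial points and all $k$, in some norm, with $\beta<\gamma$. Take $Q_0=Q^\star+c\one$ with $c\neq0$. The left side equals $\gamma^k|c|\,\|\one\|$ and the right side equals $C\beta^k|c|\,\|\one\|$, so we would need $(\gamma/\beta)^k\le C$ for all $k$. This fails, since $\gamma/\beta>1$ makes the left side unbounded. The argument works in any norm because all norms on $\R^{|\calS||\calA|}$ are equivalent and that only changes the constant $C$. It also holds for every affine representation of the error, since the trajectory above is explicit.

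None of these steps is hard. The only point needing care is stating precisely what ``no uniform global convergence estimate'' means, namely a bound of the form $C\beta^k\|Q_0-Q^\star\|$ with $C$ independent of $Q_0$ and $k$. The explicit trajectory then rules out every $\beta<\gamma$.
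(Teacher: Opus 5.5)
Your proposal is correct and takes essentially the same route as the paper: iterate the shift identity \cref{eq:shift-identity} from $Q^\star$ to obtain $Q_k-Q^\star=\gamma^k c\one$, then note that a uniform bound with rate $\beta<\gamma$ would force $(\gamma/\beta)^k\le C$ for all $k$, which is impossible. Your switching-representation cross-check and the explicit normalization by $\|Q_0-Q^\star\|$ are harmless additions; the degenerate case $\beta=0$, where $\gamma/\beta$ is undefined, is ruled out directly because the error $\gamma^k c\one$ is nonzero for every $k\ge1$.
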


\section{Deflated Q-VI}\label{sec:deflated-vi}

Let us first define the Bellman residual and its empirical mean by
\[
r_k:=F(Q_k)-Q_k,
\qquad
\bar r_k:=\frac1{|\calS||\calA|}\one^\top r_k.
\]
The deflated Q-VI update is
\begin{equation}
\label{eq:deflated-vi}
Q_{k+1}
=
F(Q_k)
+
\frac{\gamma}{1-\gamma}\bar r_k\one,
\qquad
k\in\{0,1,2,\ldots\}.
\end{equation}
Equivalently,
\[
Q_{k+1}
=
F(Q_k)
+
\frac{\gamma}{1-\gamma}
\left(
\frac1{|\calS||\calA|}\one^\top(F(Q_k)-Q_k)
\right)\one .
\]
This update can be viewed as the all-ones version of the classical rank-one
residual correction of Bertsekas~\cite{bertsekas1995rankone}. In the
fixed-policy discounted linear case, that correction chooses a scalar residual
shift along a prescribed direction; when the direction is the all-ones vector,
the shift has exactly the same form as \cref{eq:deflated-vi}. Thus, the present
update is the Q-VI analogue of a classical rank-one recentering step.

The same formula is also closely related to R1-VI of Kolarijani et
al.~\cite{kolarijani2025rankone}. In their notation, \(d_k\) is the averaging
distribution, or residual-weighting vector, used at iteration \(k\), and
\(\langle d_k,x\rangle=d_k^\top x\). In the notation of the present paper, the
Q-function Bellman optimality operator is \(F\). The analogous Q-function
residual correction can therefore be written as
\[
Q_{k+1}=F(Q_k)+\frac{\gamma}{1-\gamma}
\langle d_k,F(Q_k)-Q_k\rangle\one.
\]
The uniform update in \cref{eq:deflated-vi} corresponds to fixing the averaging
vector to \((|\calS||\calA|)^{-1}\one\), while
\cref{sec:d-weighted-deflation} allows an arbitrary fixed state-action
distribution. The fixed-\(d\) analysis
should be distinguished from the adaptive \(d_k\) choices used in R1-VI/R1-QL~\cite{kolarijani2025rankone}.

\begin{algorithm}[H]
\caption{Deflated Q-VI}
\label{alg:deflated-vi}
\begin{algorithmic}[1]
\State Initialize \(Q_0\in\R^{|\calS||\calA|}\).
\For{\(k=0,1,2,\ldots\)}
    \State Compute the Bellman residual \(r_k=F(Q_k)-Q_k\).
    \State Compute its empirical mean \(\bar r_k=\frac1{|\calS||\calA|}\one^\top r_k\).
    \State Update \(Q_{k+1}=F(Q_k)+\frac{\gamma}{1-\gamma}\bar r_k\one\).
\EndFor
\end{algorithmic}
\end{algorithm}

The update separates the policy-relevant transverse convergence from the
policy-irrelevant all-ones direction. Standard Q-VI has a switching-system representation whose JSR equals \(\gamma\), because the all-ones direction is always an eigendirection
with eigenvalue \(\gamma\), and~\cref{prop:ordinary-sharp} shows that no error rate smaller than \(\gamma\) is possible for standard Q-VI. The
residual correction removes this all-ones direction in the associated switching system representation. Therefore, the convergence rate of the corresponding switching-system representation becomes the projected JSR \(\bar\rho\), the same quantity that controls the projected error dynamics in the previous section.
As shown below, this statement concerns value-error
recentering: the projected trajectory and the corresponding greedy policy sequence remain the same as in standard Q-VI initialized at the same point.

The first point to verify is that the correction term in deflated Q-VI does not introduce any spurious fixed point.
\begin{lemma}
\label{lem:deflated-fixed-point}
The deflated Q-VI map
\[
T_{\rm def}(Q)
:=
F(Q)
+
\frac{\gamma}{1-\gamma}
\left(
\frac1{|\calS||\calA|}\one^\top(F(Q)-Q)
\right)\one
\]
has the unique fixed point \(Q^\star\).
\end{lemma}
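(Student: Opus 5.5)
The plan has two parts: first check that \(Q^\star\) is a fixed point, then show that any fixed point must equal \(Q^\star\).

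For existence, substitute \(Q=Q^\star\) into \(T_{\rm def}\). Since \(F(Q^\star)=Q^\star\), the residual \(F(Q^\star)-Q^\star\) vanishes, so its empirical mean is zero. Hence
\[
T_{\rm def}(Q^\star)=F(Q^\star)=Q^\star .
\]

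For uniqueness, let \(Q\) satisfy \(T_{\rm def}(Q)=Q\) and write
\[
c:=\frac{1}{|\calS||\calA|}\one^\top\bigl(F(Q)-Q\bigr).
\]
The fixed-point equation says
\[
F(Q)-Q=-\frac{\gamma}{1-\gamma}\,c\,\one .
\]
Apply \(\frac{1}{|\calS||\calA|}\one^\top\) to both sides. The left side averages to \(c\), and the right side averages to \(-\frac{\gamma}{1-\gamma}c\). Therefore
\[
c\left(1+\frac{\gamma}{1-\gamma}\right)=\frac{c}{1-\gamma}=0 .
\]
Since \(\gamma\in(0,1)\), this gives \(c=0\). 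The fixed-point equation then reduces to \(F(Q)=Q\). Because \(F\) is a \(\gamma\)-contraction in the infinity norm, its fixed point is unique, so \(Q=Q^\star\).

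An alternative for the last step is to use \cref{lem:exact-representation}, which gives \((A_{\mu_Q}-I)e=\alpha\one\) with \(e:=Q-Q^\star\). This route would rely on \(A_{\mu_Q}-I\) being invertible, which holds because \(\|A_{\mu_Q}\|_\infty=\gamma<1\). It is unnecessary, though, since the averaging argument already forces \(\alpha=0\).

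The only step needing care is the scalar averaging step. One must use that \(\one^\top\one=|\calS||\calA|\), so that the correction term averages to exactly \(-\frac{\gamma}{1-\gamma}c\), and that the coefficient \(\frac{1}{1-\gamma}\) is nonzero. Everything else is routine.
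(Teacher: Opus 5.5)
Your proposal is correct and follows the paper's proof essentially step for step. You verify that \(Q^\star\) is fixed, average the fixed-point identity \(F(Q)-Q=-\frac{\gamma}{1-\gamma}c\,\one\) to force \(c=0\), and then invoke uniqueness of the Bellman fixed point. Your aside about the alternative route via \cref{lem:exact-representation} is harmless and, as you note, not needed.
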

The above lemma shows that the deflated Q-VI map is consistent with the Bellman operator. The next lemma states the simple geometric effect:
compared with standard Q-VI, the deflated iterate differs only by an added term
\(c_k\one\).
\begin{lemma}[Scalar-shift equivalence to standard Q-VI]
\label{lem:scalar-shift-equivalence}
Let \(U_{k+1}=F(U_k)\) be standard Q-VI with \(U_0=Q_0\), and let
\(Q_k\) be generated by \cref{eq:deflated-vi} from the same initialization. Then
there exist scalars \(c_k\in\R\) such that
\[
Q_k=U_k+c_k\one,
\qquad \forall k\ge0.
\]
Consequently,
\[
\proj Q_k=\proj U_k,
\qquad
\pi_{Q_k}=\pi_{U_k},
\qquad \forall k\ge0,
\]
under the same tie-breaking rule.
\end{lemma}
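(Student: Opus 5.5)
We argue by induction on \(k\), using only the shift identity \cref{eq:shift-identity} for \(F\). The induction hypothesis is \(Q_k=U_k+c_k\one\) for some scalar \(c_k\). The base case holds with \(c_0:=0\), since \(U_0=Q_0\).

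For the inductive step, assume \(Q_k=U_k+c_k\one\). By \cref{eq:shift-identity},
\[
F(Q_k)=F(U_k+c_k\one)=F(U_k)+\gamma c_k\one=U_{k+1}+\gamma c_k\one .
\]
The correction term in \cref{eq:deflated-vi} is \(\frac{\gamma}{1-\gamma}\bar r_k\one\). Here \(\bar r_k\) is a real number, whatever its value, so this term is a scalar multiple of \(\one\). Substituting both pieces into \cref{eq:deflated-vi} gives \(Q_{k+1}=U_{k+1}+c_{k+1}\one\) with
\[
c_{k+1}:=\gamma c_k+\frac{\gamma}{1-\gamma}\bar r_k .
\]
This closes the induction. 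We never need the explicit value of \(\bar r_k\), only that it is a scalar. If desired, one could also record \(\bar r_k=\bar r^U_k+(\gamma-1)c_k\), which gives the closed form \(c_{k+1}=\frac{\gamma}{1-\gamma}\bar r^U_k\), but this is not required.

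The consequences follow directly. By \eqref{eq:projection-kills-one}, \(\proj Q_k=\proj U_k+c_k\proj\one=\proj U_k\).

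For the policies, note that for each state \(s\) the vector \(Q_k(s,\cdot)\) equals \(U_k(s,\cdot)+c_k\). A common additive constant preserves the set of maximizers, so \(\Argmax_a Q_k(s,a)=\Argmax_a U_k(s,a)\). Since the tie-breaking rule is a fixed selection from this set, it returns the same action, and hence \(\pi_{Q_k}=\pi_{U_k}\).

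The only point that needs care is this last one. The tie-breaking rule has to depend only on the maximizer set, or more generally be shift-invariant, which holds for any fixed rule such as choosing the smallest index. Everything else is routine.
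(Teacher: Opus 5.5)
Your proof is correct and is essentially the paper's argument: induction via the shift identity $F(Q+c\one)=F(Q)+\gamma c\one$, then $\proj\one=0$, then the fact that a uniform shift does not change the greedy selection. The only difference is that the paper carries out the residual-mean computation to obtain the closed form $c_{k+1}=\frac{\gamma}{1-\gamma}\cdot\frac{1}{|\calS||\calA|}\one^\top(F(U_k)-U_k)$, which, as you correctly note, is not needed because the correction term is a scalar multiple of $\one$ whatever its value.
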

\Cref{lem:scalar-shift-equivalence} shows that the residual correction does not
accelerate policy identification relative to standard Q-VI with the same
initialization. It only adds a uniform shift \(c_k\one\) to the
standard Q-VI iterate, so all state-action values move together by the same
amount. The benefit analyzed in this paper is faster convergence of the full
Q-function error after removing the all-ones mode.

\section{Exact switching representation of deflated Q-VI}
\label{sec:exact-deflated-dynamics}
The fixed-point property confirms that the modification is consistent with the
original Bellman equation. This section computes the exact switching system dynamics
induced by the deflated Q-VI map. This calculation is the key step that shows how the
all-ones direction is removed.

To proceed, let \(Q_k\) be generated by~\cref{eq:deflated-vi}, and define
\[
e_k:=Q_k-Q^\star,
\qquad
z_k:=\proj e_k .
\]
Since \(\proj\) is the orthogonal projection onto
\(\operatorname{span}(\one)^\perp\), the vector \(z_k\) is the component of the
error \(e_k\) orthogonal to the all-ones direction:
\[
z_k=\proj e_k\in \operatorname{span}(\one)^\perp .
\]
The remaining component of \(e_k\) lies in \(\operatorname{span}(\one)\). More
precisely, if
\[
a_k:=\frac1{|\calS||\calA|}\one^\top e_k,
\]
then the full error admits the orthogonal decomposition
\begin{equation}
\label{eq:deflated-error-decomposition-intro}
e_k
=
Q_k-Q^\star
=
a_k\one+\proj e_k
=
a_k\one+z_k .
\end{equation}
Here \(a_k\one\) is the all-ones component of the error, while
\(z_k\) is its transverse component in
\(\operatorname{span}(\one)^\perp\).
\begin{lemma}[Orthogonal decomposition of the Q-error]
\label{lem:error-decomposition}
Let \(Q_k\) be generated by~\cref{eq:deflated-vi}, and define
\[
e_k:=Q_k-Q^\star,
\qquad
z_k:=\proj e_k .
\]
Then \(z_k\in \operatorname{span}(\one)^\perp\). Moreover, if
\[
a_k:=\frac1{|\calS||\calA|}\one^\top e_k,
\]
then the full error admits the orthogonal decomposition
\begin{equation}
\label{eq:deflated-error-decomposition}
e_k
=
Q_k-Q^\star
=
a_k\one+\proj e_k
=
a_k\one+z_k .
\end{equation}
Here \(a_k\one\in\operatorname{span}(\one)\) is the all-ones component of the
error, while \(z_k\in\operatorname{span}(\one)^\perp\) is its orthogonal
transverse component. In addition, this decomposition is unique.
\end{lemma}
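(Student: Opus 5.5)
This statement is a direct specialization of \cref{lem:empirical-orthogonal-error-decomposition}, which holds for an arbitrary error vector. The plan is therefore to note that the deflated iterates \(Q_k\) from \cref{eq:deflated-vi} are simply vectors in \(\R^{|\calS||\calA|}\). Hence \(e_k=Q_k-Q^\star\) is an arbitrary error vector, and we may apply that lemma verbatim. For self-containedness, I would also give the short direct verification below.

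\textbf{Step 1: membership.} Using \(\one^\top\one=|\calS||\calA|\), compute
\[
\one^\top z_k=\one^\top e_k-\tfrac{1}{|\calS||\calA|}(\one^\top\one)(\one^\top e_k)=0,
\]
so \(z_k\in\operatorname{span}(\one)^\perp\).

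\textbf{Step 2: decomposition.} By the definition of \(\proj\),
\[
\proj e_k=e_k-\tfrac{1}{|\calS||\calA|}\one\one^\top e_k=e_k-a_k\one .
\]
Rearranging gives \(e_k=a_k\one+z_k\). The decomposition is orthogonal because \(\one^\top z_k=0\) by Step 1.

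\textbf{Step 3: uniqueness.} Suppose \(e_k=\alpha\one+w\) with \(\one^\top w=0\). Taking the inner product with \(\one\) gives
\[
\one^\top e_k=\alpha|\calS||\calA|,
\]
so \(\alpha=a_k\), and then \(w=e_k-a_k\one=z_k\). Equivalently, \(\operatorname{span}(\one)\cap\operatorname{span}(\one)^\perp=\{0\}\).

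There is no real obstacle here. The only point to state explicitly is that nothing about the deflated dynamics is used: the lemma is a purely linear-algebraic fact about \(e_k\). The specific recursion matters only in the subsequent analysis of how \(a_k\) and \(z_k\) evolve.
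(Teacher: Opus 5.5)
Your proof is correct and matches the paper's argument essentially step for step: you write $\proj e_k=e_k-a_k\one$, check $\one^\top z_k=0$ using $\one^\top\one=|\calS||\calA|$, and obtain uniqueness by applying $\one^\top$ to an alternative decomposition. Your remark that the lemma is just \cref{lem:empirical-orthogonal-error-decomposition} applied to the deflated error is also accurate, since no property of the deflated recursion is used.
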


With this decomposition in place, we can now write the deflated Q-VI recursion in
separate coordinates. The next proposition shows that the projected component
\(z_k\) follows the same projected switching dynamics as standard Q-VI, while
the scalar component \(a_k\) is determined by \(z_k\) rather than by its own
previous value.
\begin{proposition}[Exact deflated switching dynamics]
\label{prop:exact-deflated-dynamics}
Let \(Q_k\) be generated by~\cref{eq:deflated-vi}, and define
\[
e_k:=Q_k-Q^\star,
\qquad
z_k:=\proj e_k,
\qquad
a_k:=\frac1{|\calS||\calA|}\one^\top e_k .
\]
Then
\[
z_{k+1}=\bar A_{\mu_k}z_k,
\qquad
k\in\{0,1,2,\ldots\},
\]
and
\[
a_{k+1}=\ell_{\mu_k}z_k,
\qquad
\ell_\mu:=\frac{1}{|\calS||\calA|(1-\gamma)}\one^\top A_\mu,
\qquad
k\in\{0,1,2,\ldots\}.
\]
Equivalently, they can be written by the following augmented system:
\[
\begin{bmatrix}
a_{k+1}\\ z_{k+1}
\end{bmatrix}
=
D_{\mu_k}
\begin{bmatrix}
a_k\\ z_k
\end{bmatrix},
\qquad
D_\mu:=
\begin{bmatrix}
0 & \ell_\mu\\
0 & \bar A_\mu
\end{bmatrix},
\qquad
k\in\{0,1,2,\ldots\}.
\]
\end{proposition}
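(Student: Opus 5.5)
Write $N:=|\calS||\calA|$. The plan is to compute the one-step error directly using the exact stochastic-policy representation of \cref{lem:exact-representation}, and then read off the two coordinates with the orthogonal decomposition of \cref{lem:error-decomposition}.

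\textbf{One-step error.} Let $\mu_k:=\mu_{Q_k}$ be the policy supplied by \cref{lem:exact-representation}. Then
\[
F(Q_k)-Q^\star=A_{\mu_k}e_k
\qquad\text{and}\qquad
r_k=F(Q_k)-Q_k=(A_{\mu_k}-I)e_k .
\]
Substituting these into \cref{eq:deflated-vi} gives
\[
e_{k+1}=A_{\mu_k}e_k+\frac{\gamma}{(1-\gamma)N}\,\one^\top(A_{\mu_k}-I)e_k\,\one ,
\]
an exact linear recursion along the realized trajectory.

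\textbf{Transverse component.} Apply $\proj$ to the recursion. The correction term is a multiple of $\one$, so \cref{eq:projection-kills-one} removes it, leaving $z_{k+1}=\proj A_{\mu_k}e_k$. Insert $e_k=a_k\one+z_k$ and use \cref{eq:common-ones-eigenrelation}: the term $\proj A_{\mu_k}a_k\one=\gamma a_k\proj\one$ vanishes. Since $z_k=\proj z_k$, this yields $z_{k+1}=\proj A_{\mu_k}\proj z_k=\bar A_{\mu_k}z_k$. This is the same computation as in \cref{lem:ordinary-projected-error-recursion}.

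\textbf{Scalar component.} Compute $a_{k+1}=N^{-1}\one^\top e_{k+1}$ and again insert $e_k=a_k\one+z_k$.
\begin{itemize}
\item From \cref{eq:common-ones-eigenrelation}, $\one^\top A_{\mu_k}\one=\gamma N$.
\item Since $\one^\top z_k=0$, we get $\one^\top(A_{\mu_k}-I)z_k=\one^\top A_{\mu_k}z_k$.
\end{itemize}
Hence
\[
a_{k+1}=\gamma a_k+\frac1N\one^\top A_{\mu_k}z_k+\frac{\gamma}{1-\gamma}\Big((\gamma-1)a_k+\frac1N\one^\top A_{\mu_k}z_k\Big).
\]
The $a_k$ terms cancel: $\gamma a_k-\gamma a_k=0$. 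The $z_k$ coefficient is $\frac1N\bigl(1+\frac{\gamma}{1-\gamma}\bigr)=\frac{1}{N(1-\gamma)}$, which gives exactly $a_{k+1}=\ell_{\mu_k}z_k$.

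\textbf{Main obstacle and conclusion.} The main point to get right is this cancellation: the factor $\gamma/(1-\gamma)$ must exactly kill the autonomous $\gamma a_k$ term, so the bookkeeping of $\one^\top A_\mu\one=\gamma N$ must be done carefully. Stacking the two coordinate recursions then gives the block form $D_{\mu_k}$. Uniqueness of the decomposition (\cref{lem:error-decomposition}) guarantees that the coordinates $(a_k,z_k)$ are well defined at every step.
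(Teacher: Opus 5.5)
Your proposal is correct and follows the paper's proof essentially step for step. You use \cref{lem:exact-representation} at the deflated iterate to get $e_{k+1}=A_{\mu_k}e_k+\frac{\gamma}{1-\gamma}\bar r_k\one$, project to obtain $z_{k+1}=\bar A_{\mu_k}z_k$, and then average, where $\frac{\gamma}{1-\gamma}(\gamma-1)a_k=-\gamma a_k$ cancels the autonomous $\gamma a_k$ term and leaves the coefficient $\frac{1}{|\calS||\calA|(1-\gamma)}$ on $\one^\top A_{\mu_k}z_k$.
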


The deflated Q-VI update keeps the projected component \(z_k\) evolving through the
projected switching system and recomputes the scalar all-ones component from
that projected error.
The block representation makes the role of deflation explicit. The transverse
state \(z_k\) evolves exactly as in projected standard Q-VI, whereas the scalar
component \(a_k\) is now driven only by \(z_k\) and no longer feeds itself
through an eigenvalue \(\gamma\).

\section{Provable JSR reduction}
\label{sec:jsr-reduction}

The augmented system formulation in~\cref{prop:exact-deflated-dynamics} immediately links the convergence rate of deflated
Q-VI to the projected switching system. Motivated by~\cref{prop:exact-deflated-dynamics}, let us define the associated augmented switching system family of deflated Q-VI by
\[
\mathcal D_{\rm def}
:=
\left\{
D_\mu=
\begin{bmatrix}
0 & \ell_\mu\\
0 & \bar A_\mu
\end{bmatrix}
:
\mu:\calS\to\Delta_{|\calA|}
\right\},
\]
where each \(D_\mu\) is regarded as a linear map on
\(\R\times\operatorname{span}(\one)^\perp\). Its JSR is denoted by
\[
\rho_{\rm def}:=\rho(\mathcal D_{\rm def}).
\]

\begin{theorem}[Associated full switched-family JSR of deflated Q-VI]
\label{thm:deflated-jsr}
The associated augmented switching family of the deflated Q-VI
iteration~\cref{eq:deflated-vi} satisfies
\[
\rho_{\rm def}=\bar\rho.
\]
Consequently, every actual deflated Q-VI trajectory has an exponential convergence
rate no larger than \(\bar\rho\), in the switched-family sense made explicit in \cref{thm:explicit-deflated-rate}. Since \(\bar\rho\le\gamma\), this certified
rate is no worse than the classical worst-case full-error rate of standard Q-VI identified in~\cref{prop:qvi-full-jsr,prop:ordinary-sharp}. If \(\bar\rho<\gamma\), the
certified switching system rate is strictly smaller than \(\gamma\).
\end{theorem}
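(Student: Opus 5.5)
The plan is to apply the block upper-triangular JSR formula, \cref{lem:block-upper-triangular-jsr}, to the family \(\mathcal D_{\rm def}\). Each \(D_\mu\) has upper-left block \(B_\mu=0\in\R^{1\times1}\), off-diagonal block \(\ell_\mu\), and lower-right block \(\bar A_\mu\) acting on \(\operatorname{span}(\one)^\perp\). The lemma then gives
\[
\rho_{\rm def}=\max\{\rho(\{0\}),\rho(\bar{\calH}_{\rm st})\}=\rho(\bar{\calH}_{\rm st}),
\]
since \(\rho(\{0\})=0\). First I will check that the lemma applies, which requires a bounded family. Every \(A_\mu=\gamma P\Pi^\mu\) is row-stochastic up to the factor \(\gamma\), so \(\|A_\mu\|_\infty=\gamma\). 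Hence \(\|\bar A_\mu\|\) and \(\|\ell_\mu\|\) are uniformly bounded over all stochastic \(\mu\). To place the argument in matrix form, I fix an orthonormal basis of \(\operatorname{span}(\one)^\perp\). In that basis each \(D_\mu\) is a genuine \((|\calS||\calA|)\times(|\calS||\calA|)\) block upper-triangular matrix, and the JSR does not depend on the choice of basis.

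The second step is to identify \(\rho(\bar{\calH}_{\rm st})\) with \(\bar\rho=\rho(\bar{\calH})\). The map \(\mu\mapsto\Pi^\mu\) is affine in the row-wise action probabilities, and each \(\mu(s)\) is a convex combination of one-hot vectors. It follows that every stochastic policy \(\mu\) is a convex combination of deterministic policies, \(\mu=\sum_\pi\lambda_\pi\pi\), for example via the product measure \(\lambda_\pi=\prod_s\mu(\pi(s)\mid s)\). This yields \(\Pi^\mu=\sum_\pi\lambda_\pi\Pi^\pi\). Because \(A\mapsto\proj A\proj\) is linear, we get \(\bar A_\mu=\sum_\pi\lambda_\pi\bar A_\pi\), so \(\bar{\calH}\subseteq\bar{\calH}_{\rm st}\subseteq\co(\bar{\calH})\). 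Monotonicity of the JSR together with \cref{lem:jsr-convexification} then gives \(\rho(\bar{\calH}_{\rm st})=\rho(\bar{\calH})=\bar\rho\). Combining the two steps yields \(\rho_{\rm def}=\bar\rho\).

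For the consequences, I will use \cref{prop:exact-deflated-dynamics}. The actual error pair \((a_k,z_k)\) satisfies \((a_{k+1},z_{k+1})=D_{\mu_k}(a_k,z_k)\) with \(D_{\mu_k}\in\mathcal D_{\rm def}\). Whenever \(\bar\rho<1\), \cref{lem:jsr-product-bound} supplies a bound \(\|(a_k,z_k)\|\le C_\beta\beta^k\|(a_0,z_0)\|\) for every \(\beta>\bar\rho\), and \(\bar\rho\le\gamma<1\) holds by \cref{lem:restricted-jsr-bound}. Since \(e_k=a_k\one+z_k\), the same rate carries over to \(\|e_k\|\). Comparison with \cref{prop:qvi-full-jsr,prop:ordinary-sharp} is then immediate.

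The main obstacle I anticipate is rigor in applying \cref{lem:block-upper-triangular-jsr} to an infinite, though bounded, family that acts on the non-standard space \(\R\times\operatorname{span}(\one)^\perp\). If the cited lemma is only trusted for finite families, I would fall back on a direct argument. Products satisfy
\[
D_{\mu_{k-1}}\cdots D_{\mu_0}=\begin{bmatrix}0&\ell_{\mu_{k-1}}\bar A_{\mu_{k-2}}\cdots\bar A_{\mu_0}\\0&\bar A_{\mu_{k-1}}\cdots\bar A_{\mu_0}\end{bmatrix},
\]
so the norm of the product is bounded above by \((1+\sup_\mu\|\ell_\mu\|)\) times a projected product of length \(k-1\). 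It is also bounded below by the norm of the lower-right block. Taking \(k\)-th roots gives \(\rho_{\rm def}=\rho(\bar{\calH}_{\rm st})\) directly, without invoking the general lemma.
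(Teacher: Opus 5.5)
Your proposal is correct and follows the paper's proof. The block upper-triangular lemma reduces $\rho_{\rm def}$ to $\rho(\bar{\calH}_{\rm st})$, and writing each $\bar A_\mu$ as a convex combination of the $\bar A_\pi$ (your product-measure weights make explicit what the paper only asserts), together with convexification invariance, gives $\bar\rho$. Your product-bound argument for the trajectory consequences is a direct version of what the paper defers to \cref{thm:explicit-deflated-rate}.

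There is one harmless slip in your fallback argument. The lower-right block of the length-$k$ product is itself a length-$k$ projected product, so the prefactor should be $\sup_\mu\|\ell_\mu\|+\sup_\mu\|\bar A_\mu\|$ rather than $1+\sup_\mu\|\ell_\mu\|$. This does not affect the $k$-th root limit.
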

The equality \(\rho_{\rm def}=\bar\rho\) is most informative when the projected
rate is strictly smaller than the discount factor. The following elementary MDP
shows that such a strict separation can occur.
\begin{example}
\label{ex:strict-projected-rate}
Consider a discounted MDP with one state and two actions. The next state is the
same state with probability one under both actions, and the rewards can be
chosen arbitrarily. Then the state-action vector has dimension two. For the two
deterministic policies, let \(e_1,e_2\) denote the standard basis vectors and
\(\one=(1,1)^\top\). The corresponding linear parts are
\[
A_1=\gamma\one e_1^\top,
\qquad
A_2=\gamma\one e_2^\top .
\]
If \(z\in\operatorname{span}(\one)^\perp\), then \(z_1+z_2=0\), and
\[
A_i z=\gamma z_i\one,
\qquad i=1,2.
\]
Thus \(A_i z\) lies entirely in \(\operatorname{span}(\one)\), so the transverse
projection removes it:
\[
\proj A_i\proj z=0,
\qquad i=1,2.
\]
Hence, \(\bar A_1=\bar A_2=0\), and therefore
\[
\bar\rho=0<\gamma
\]
for every \(\gamma\in(0,1)\). In this example, standard Q-VI still has JSR \(\gamma\) by~\cref{prop:qvi-full-jsr}, whereas the
JSR associated with the deflated Q-VI is zero by~\cref{thm:deflated-jsr}.
\end{example}

Together with \cref{prop:ordinary-sharp}, this example separates the convergence of standard Q-VI error from the convergence of deflated Q-VI:
standard Q-VI retains the sharp \(\gamma\) all-ones mode, while deflated Q-VI
removes it from the associated switching dynamics.
The following result presents an explicit finite-time error bound for the deflated Q-VI recursion.
\begin{theorem}\label{thm:explicit-deflated-rate}
Let us consider~\cref{eq:deflated-vi}. Fix any
\(\varepsilon\in(0,1-\bar\rho)\), and define
\[
\beta_\varepsilon:=\bar\rho+\varepsilon .
\]
Then there exists \(C_{\beta_\varepsilon}>0\) such that, for all \(k\ge1\),
\[
\|Q_k-Q^\star\|_2
\le
C_{\beta_\varepsilon}\beta_\varepsilon^{k-1}\|\proj(Q_0-Q^\star)\|_2.
\]
Consequently,
\[
\limsup_{k\to\infty}\|Q_k-Q^\star\|_2^{1/k}\le\bar\rho.
\]
If \(\bar\rho<\gamma\), then deflated Q-VI has a certified full-error convergence rate strictly smaller than the classical worst-case full-error rate of standard Q-VI.
\end{theorem}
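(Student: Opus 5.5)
The plan is to read the bound directly off the exact augmented dynamics of \cref{prop:exact-deflated-dynamics}, rather than working with the augmented family $\mathcal D_{\rm def}$ as a whole. By that proposition, $z_{k+1}=\bar A_{\mu_k}z_k$ and $a_{k+1}=\ell_{\mu_k}z_k$ along the realized trajectory, with $\mu_k$ the stochastic policies supplied by \cref{lem:exact-representation}. Iterating the first relation gives
\[
z_k=\bar A_{\mu_{k-1}}\cdots\bar A_{\mu_0}z_0,\qquad z_0=\proj(Q_0-Q^\star).
\]
The full error is then handled through \cref{lem:error-decomposition}: for $k\ge1$,
\[
\|e_k\|_2^2=|a_k|^2\,|\calS||\calA|+\|z_k\|_2^2 .
\]

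First step: bound the products of projected stochastic-policy matrices. Each $\bar A_\mu$ with $\mu$ stochastic is a convex combination of the $\bar A_\pi$, $\pi\in\Theta$. Indeed, $\mu\mapsto\Pi^\mu$ is affine and statewise separable, so $\Pi^\mu$ is a convex combination of deterministic $\Pi^\pi$ (a product-measure decomposition). Since $\mu\mapsto\bar A_\mu=\gamma\proj P\Pi^\mu\proj$ is linear in $\Pi^\mu$, the same combination expresses $\bar A_\mu$. Hence $\bar{\calH}_{\rm st}\subseteq\co(\bar{\calH})$. By \cref{lem:jsr-convexification}, $\rho(\co(\bar{\calH}))=\bar\rho$, with all maps regarded as operators on $\operatorname{span}(\one)^\perp$ and the spectral norm induced there. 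Applying \cref{lem:jsr-product-bound} to the family $\co(\bar{\calH})$, which is bounded, gives $C'$ with
\[
\|\bar A_{\mu_{j-1}}\cdots\bar A_{\mu_0}\|_2\le C'\beta_\varepsilon^{j}\quad\text{for all }j\ge0.
\]
I expect this to be the main obstacle. \cref{lem:jsr-product-bound} is stated for a generic bounded set $\calH$, but the convexification lemma was stated for JSR values, not for product bounds. So I must be careful to apply the product bound to $\co(\bar{\calH})$ itself, using its JSR $\bar\rho$. That is legitimate because the standard proof of the product bound works for any bounded family: pick $N$ with $\sup\|\text{products of length }N\|^{1/N}<\beta_\varepsilon$, then absorb the remainders into the constant.

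Second step: control the scalar coordinate. Since $P\Pi^\mu$ is row-stochastic, $\|A_\mu\|_2\le\gamma\sqrt{|\calS||\calA|}$, using the bound $\|M\|_2\le\sqrt{\|M\|_1\|M\|_\infty}$ and the fact that column sums are at most $|\calS|$. A cruder uniform bound $L:=\sup_\mu\|\ell_\mu\|_2<\infty$ suffices, and it holds by compactness of the set of stochastic policies. Therefore $|a_k|\le L\|z_{k-1}\|_2\le LC'\beta_\varepsilon^{k-1}\|z_0\|_2$ for $k\ge1$. Also $\|z_k\|_2\le C'\beta_\varepsilon^{k}\|z_0\|_2\le C'\beta_\varepsilon^{k-1}\|z_0\|_2$, since $\beta_\varepsilon<1$. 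Combining the two through the orthogonal decomposition gives
\[
\|Q_k-Q^\star\|_2\le C'\bigl(L\sqrt{|\calS||\calA|}+1\bigr)\beta_\varepsilon^{k-1}\|\proj(Q_0-Q^\star)\|_2,
\]
which defines $C_{\beta_\varepsilon}$.

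Final step: take $k$-th roots to get $\limsup_k\|Q_k-Q^\star\|_2^{1/k}\le\beta_\varepsilon$, and let $\varepsilon\downarrow0$ to obtain the bound $\bar\rho$. If $\|z_0\|_2=0$, then $e_k=0$ for all $k\ge1$ and the claim is trivial. The final sentence of the theorem follows by comparing with \cref{prop:ordinary-sharp}, which shows that the rate $\gamma$ is attained by standard Q-VI.
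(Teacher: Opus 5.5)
Your proposal is correct and follows essentially the same route as the paper: you iterate the projected recursion, bound the stochastic-policy projected products via convexification plus the uniform product bound, control $a_k$ through $\sup_\mu\|\ell_\mu\|_2<\infty$, and combine the two through $e_k=a_k\one+z_k$. Your constant $C'(L\sqrt{|\calS||\calA|}+1)$ differs only immaterially from the paper's $c_{\beta_\varepsilon}(\sqrt{|\calS||\calA|}L_\ell+\beta_\varepsilon)$, and there is one harmless slip: the column sums of $P\Pi^\mu$ are bounded by $|\calS||\calA|$, not $|\calS|$ (for example, when every state-action pair transitions to the same state), and that correct bound is what yields your stated $\|A_\mu\|_2\le\gamma\sqrt{|\calS||\calA|}$; since you fall back on compactness for $L$, nothing depends on it.
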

The next lemma gives a simple explicit estimate for this constant, which can be used
when a completely numerical convergence bound is desired.
\begin{lemma}[Basic bound for the scalar lifting constant]
\label{lem:scalar-lifting-bound}
For
\[
\ell_\mu:=\frac{1}{|\calS||\calA|(1-\gamma)}\one^\top A_\mu,
\]
one has
\[
L_\ell:=\sup_\mu\|\ell_\mu\|_2\le \frac{\gamma}{1-\gamma}.
\]
Consequently, if the projected switching family satisfies
\[
\|\bar A_{\mu_{m-1}}\cdots\bar A_{\mu_0}\|_2
\le c_{\beta_\varepsilon}\beta_\varepsilon^m,
\qquad
\beta_\varepsilon:=\bar\rho+\varepsilon,
\]
for all \(m\ge0\) and all stochastic-policy switching sequences, then the constant
in the full-error estimate can be chosen as
\[
C_{\beta_\varepsilon}
:=c_{\beta_\varepsilon}
\left(\frac{\sqrt{|\calS||\calA|}\,\gamma}{1-\gamma}+\beta_\varepsilon\right).
\]
Moreover, if \(N_{\beta_\varepsilon}\) is the finite threshold in
\cref{lem:jsr-product-bound} for this projected family, then one may use the
coarse bound
\[
 c_{\beta_\varepsilon}
 \le
 \max\left\{1,\max_{0\le m<N_{\beta_\varepsilon}}
 \left(\frac{\gamma\sqrt{|\calS||\calA|}}{\beta_\varepsilon}\right)^m\right\},
\]
and hence
\[
C_{\beta_\varepsilon}
\le
\max\left\{1,\max_{0\le m<N_{\beta_\varepsilon}}
\left(\frac{\gamma\sqrt{|\calS||\calA|}}{\beta_\varepsilon}\right)^m\right\}
\left(\frac{\sqrt{|\calS||\calA|}\,\gamma}{1-\gamma}+\beta_\varepsilon\right).
\]
\end{lemma}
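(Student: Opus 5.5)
The lemma has three claims; I will treat them in order.

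\emph{The bound on \(L_\ell\).} Since \(A_\mu=\gamma P\Pi^\mu\) has nonnegative entries and \(P\Pi^\mu\) is row-stochastic, \(\one^\top A_\mu\) is a nonnegative row vector whose entries sum to \(\gamma\,\one^\top\one=\gamma|\calS||\calA|\). Writing \(v:=\one^\top A_\mu\), we have
\[
\|v\|_2\le\|v\|_1=\gamma|\calS||\calA|,
\]
so
\[
\|\ell_\mu\|_2\le\frac{\gamma|\calS||\calA|}{|\calS||\calA|(1-\gamma)}=\frac{\gamma}{1-\gamma}.
\]
This holds uniformly in \(\mu\). I expect this crude \(\ell_1\) bound to be what the stated constant uses.

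\emph{The choice of \(C_{\beta_\varepsilon}\).} I would use \cref{prop:exact-deflated-dynamics}. For \(k\ge1\) it gives \(z_k=\bar A_{\mu_{k-1}}\cdots\bar A_{\mu_0}z_0\) and \(a_k=\ell_{\mu_{k-1}}z_{k-1}\). By \cref{lem:error-decomposition},
\[
\|e_k\|_2\le |a_k|\,\|\one\|_2+\|z_k\|_2=\sqrt{|\calS||\calA|}\,|a_k|+\|z_k\|_2 .
\]
The two terms are bounded separately.
\begin{itemize}
\item The scalar term satisfies \(|a_k|\le L_\ell\|z_{k-1}\|_2\le \frac{\gamma}{1-\gamma}c_{\beta_\varepsilon}\beta_\varepsilon^{k-1}\|z_0\|_2\).
\item The transverse term satisfies \(\|z_k\|_2\le c_{\beta_\varepsilon}\beta_\varepsilon^{k}\|z_0\|_2=\beta_\varepsilon\,c_{\beta_\varepsilon}\beta_\varepsilon^{k-1}\|z_0\|_2\).
\end{itemize}
Adding them gives \(\|e_k\|_2\le C_{\beta_\varepsilon}\beta_\varepsilon^{k-1}\|\proj(Q_0-Q^\star)\|_2\) with exactly the stated \(C_{\beta_\varepsilon}\). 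This is consistent with \cref{thm:explicit-deflated-rate}.

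\emph{The coarse bound on \(c_{\beta_\varepsilon}\).} Here I interpret \(N_{\beta_\varepsilon}\) as an index such that every product of length \(m\ge N_{\beta_\varepsilon}\) already satisfies \(\|\bar A_{\mu_{m-1}}\cdots\bar A_{\mu_0}\|_2\le\beta_\varepsilon^m\). Such an index comes from the limit in the JSR definition, together with \cref{lem:jsr-convexification} to pass from the deterministic family to stochastic policies. For such \(m\) the constant \(1\) suffices.

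For \(m<N_{\beta_\varepsilon}\), I would bound each factor in operator \(2\)-norm. The projection \(\proj\) has norm at most \(1\), and
\[
\|A_\mu\|_2\le\sqrt{\|A_\mu\|_1\|A_\mu\|_\infty}\le\sqrt{\gamma|\calS||\calA|\cdot\gamma}\le\gamma\sqrt{|\calS||\calA|}.
\]
Here \(\|A_\mu\|_\infty=\gamma\) because \(A_\mu\) is \(\gamma\) times a row-stochastic matrix, and \(\|A_\mu\|_1\le\gamma|\calS||\calA|\) because each column sum of a row-stochastic matrix is at most the dimension. A simpler route is \(\|A\|_2\le\sqrt n\|A\|_\infty\). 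Either way \(\|\bar A_\mu\|_2\le\gamma\sqrt{|\calS||\calA|}\), so the short products are at most \((\gamma\sqrt{|\calS||\calA|})^m=(\gamma\sqrt{|\calS||\calA|}/\beta_\varepsilon)^m\beta_\varepsilon^m\). Taking the maximum over the two regimes yields the stated bound, and substituting it into \(C_{\beta_\varepsilon}\) gives the final inequality.

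\emph{Main obstacle.} The delicate point is the meaning of the threshold \(N_{\beta_\varepsilon}\) for the infinite stochastic family. Taken literally, the JSR limit only gives \(\sup_{\text{length }m}\|\cdot\|^{1/m}<\beta_\varepsilon\) eventually, not for every \(m\) beyond some index. I would state that \(N_{\beta_\varepsilon}\) is defined as exactly such an index, which exists because the limit equals \(\bar\rho<\beta_\varepsilon\).

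Two further checks are needed. First, the stochastic family's JSR equals \(\bar\rho\), by convexification: \(\bar A_\mu\) is affine in \(\mu\), and products over the stochastic family are convex combinations of deterministic products. Second, the restriction to \(\operatorname{span}(\one)^\perp\) does not increase norms.
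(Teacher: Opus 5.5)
Your proposal is correct and follows the paper's proof essentially step for step: the $\ell_1$-mass bound $\|\one^\top A_\mu\|_2\le\|\one^\top A_\mu\|_1=\gamma|\calS||\calA|$, then substituting $L_\ell\le\gamma/(1-\gamma)$ into the estimate of \cref{thm:explicit-deflated-rate} (which you re-derive), then $\|\bar A_\mu\|_2\le\|A_\mu\|_2\le\sqrt{\|A_\mu\|_1\|A_\mu\|_\infty}\le\gamma\sqrt{|\calS||\calA|}$ for the short products, combined with the constant construction in \cref{lem:jsr-product-bound}. The ``obstacle'' you raise about $N_{\beta_\varepsilon}$ is not a real one: convergence of the length-$m$ supremum to $\bar\rho<\beta_\varepsilon$ means exactly that every product of length $m\ge N_{\beta_\varepsilon}$ is bounded by $\beta_\varepsilon^m$ for some index $N_{\beta_\varepsilon}$, which is how that lemma defines $N_{\beta_\varepsilon}$.
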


Pure uniform-shift errors are eliminated immediately, while general initial conditions converge at
the rate of the projected switching family JSR.
\begin{corollary}[Pure all-ones errors vanish in one step]
\label{cor:pure-ones}
If \(Q_0-Q^\star\in\operatorname{span}(\one)\), then deflated Q-VI satisfies
\[
Q_1=Q^\star.
\]
\end{corollary}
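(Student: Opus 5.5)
The plan is to read this off the augmented dynamics in \cref{prop:exact-deflated-dynamics}. Write $e_0:=Q_0-Q^\star=c\one$ for some $c\in\R$. By \cref{lem:error-decomposition}, $e_0$ has a unique decomposition $a_0\one+z_0$ with $z_0\in\operatorname{span}(\one)^\perp$. Since $e_0$ itself lies in $\operatorname{span}(\one)$, uniqueness gives $a_0=c$ and $z_0=\proj e_0=0$; equivalently, this follows directly from \cref{eq:projection-kills-one}.

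Next apply one step of the augmented system $[a_1;z_1]=D_{\mu_0}[a_0;z_0]$. The first block column of $D_{\mu_0}$ is zero, so the coordinate $a_0$ contributes nothing to either component. Hence $z_1=\bar A_{\mu_0}z_0=0$ and $a_1=\ell_{\mu_0}z_0=0$. Then \cref{lem:error-decomposition} gives $e_1=a_1\one+z_1=0$, that is, $Q_1=Q^\star$.

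As a sanity check independent of the switching formalism, compute directly. Using the shift identity \cref{eq:shift-identity} and $F(Q^\star)=Q^\star$, we get $F(Q_0)=Q^\star+\gamma c\one$. The residual is therefore $r_0=(\gamma-1)c\one$, so $\bar r_0=(\gamma-1)c$. The update \cref{eq:deflated-vi} then gives
\[
Q_1=Q^\star+\gamma c\one+\frac{\gamma}{1-\gamma}(\gamma-1)c\one=Q^\star+\gamma c\one-\gamma c\one=Q^\star .
\]
I would present this direct computation as the main proof and cite the block structure as the geometric explanation.

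There is no real obstacle here. The only point needing care is that \cref{prop:exact-deflated-dynamics} involves a stochastic policy $\mu_0$ chosen via \cref{lem:exact-representation}. The argument does not depend on which $\mu_0$ is selected, because $z_0=0$ annihilates both $\ell_{\mu_0}$ and $\bar A_{\mu_0}$. The direct computation avoids the issue altogether.
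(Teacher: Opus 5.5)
Your proposal is correct. Your block-structure argument in the second paragraph is exactly the paper's proof: since $z_0=\proj e_0=0$, \cref{prop:exact-deflated-dynamics} gives $z_1=\bar A_{\mu_0}z_0=0$ and $a_1=\ell_{\mu_0}z_0=0$, hence $e_1=0$.

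The direct computation you plan to lead with is a genuinely more elementary route. It uses only the shift identity \cref{eq:shift-identity} and $F(Q^\star)=Q^\star$. It therefore bypasses both the stochastic-policy linearization of \cref{lem:exact-representation} and the augmented dynamics, so the choice of $\mu_0$ never arises. It also shows directly that the coefficient $\gamma/(1-\gamma)$ is exactly the one that cancels the surviving shift $\gamma c\one$.

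The paper's route gives an interpretation instead. There the corollary is a structural consequence of the zero first block column of $D_\mu$, that is, of removing the autonomous all-ones dynamics behind the JSR reduction, rather than an isolated algebraic cancellation. Either presentation is a complete proof.
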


\begin{corollary}[Iteration complexity for Q-function error]
\label{cor:iteration-complexity}
Let \(\varepsilon_Q>0\), fix \(\varepsilon\in(0,1-\bar\rho)\), and define
\(\beta_\varepsilon:=\bar\rho+\varepsilon\). If
\(\proj(Q_0-Q^\star)=0\), then \(Q_1=Q^\star\). Otherwise, the bound
\[
k
\ge
1+
\left\lceil
\frac{
\ln\left(C_{\beta_\varepsilon}\|\proj(Q_0-Q^\star)\|_2/\varepsilon_Q\right)
}{
-\ln \beta_\varepsilon
}
\right\rceil
\]
implies
\[
\|Q_k-Q^\star\|_2\le\varepsilon_Q.
\]
\end{corollary}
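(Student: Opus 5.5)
The target is \cref{cor:iteration-complexity}, which is a direct consequence of \cref{thm:explicit-deflated-rate} together with \cref{cor:pure-ones}; the only real work is bookkeeping with logarithms.

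\emph{Degenerate case.} If \(\proj(Q_0-Q^\star)=0\), then by \cref{lem:empirical-orthogonal-error-decomposition} the initial error \(Q_0-Q^\star\) equals \(a_0\one\in\operatorname{span}(\one)\). \Cref{cor:pure-ones} then gives \(Q_1=Q^\star\). For completeness, the bound of \cref{thm:explicit-deflated-rate} with zero right-hand side shows \(Q_k=Q^\star\) for every \(k\ge1\). So any threshold of the form \(k\ge1\) works in this case.

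\emph{Main case.} Suppose \(\proj(Q_0-Q^\star)\neq0\). For \(k\ge1\), \cref{thm:explicit-deflated-rate} gives
\[
\|Q_k-Q^\star\|_2\le C_{\beta_\varepsilon}\beta_\varepsilon^{k-1}\|\proj(Q_0-Q^\star)\|_2 .
\]
It therefore suffices to ensure \(C_{\beta_\varepsilon}\beta_\varepsilon^{k-1}\|\proj(Q_0-Q^\star)\|_2\le\varepsilon_Q\). Since \(0\le\bar\rho<\beta_\varepsilon<1\), we have \(-\ln\beta_\varepsilon>0\) (the case \(\beta_\varepsilon\) arbitrarily small is fine, since \(\varepsilon>0\) keeps \(\beta_\varepsilon>0\)). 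Taking logarithms, the sufficient condition becomes
\[
(k-1)(-\ln\beta_\varepsilon)\ge \ln\!\bigl(C_{\beta_\varepsilon}\|\proj(Q_0-Q^\star)\|_2/\varepsilon_Q\bigr).
\]
The stated hypothesis gives \(k-1\ge\lceil\cdot\rceil\ge\) the unrounded quotient, which implies this inequality. If the log quantity is negative, the ceiling is at most zero, and the condition holds for all \(k\ge1\). This is consistent, because then \(C_{\beta_\varepsilon}\|\proj(Q_0-Q^\star)\|_2\le\varepsilon_Q\) and \(\beta_\varepsilon^{k-1}\le1\).

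\emph{Main obstacle.} Essentially none. The only point requiring care is the sign handling: dividing by \(-\ln\beta_\varepsilon>0\) preserves the inequality direction, and the ceiling may be nonpositive. I would state both explicitly so the argument holds for every \(\varepsilon_Q>0\).
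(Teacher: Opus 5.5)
Your proposal is correct and follows the paper's proof exactly. \cref{cor:pure-ones} handles the case \(\proj(Q_0-Q^\star)=0\); otherwise the bound of \cref{thm:explicit-deflated-rate} is forced below \(\varepsilon_Q\) by taking logarithms, using \(-\ln\beta_\varepsilon>0\).

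One refinement to your sign discussion. When the ceiling is \(\le -1\), i.e., \(C_{\beta_\varepsilon}\|\proj(Q_0-Q^\star)\|_2\le\beta_\varepsilon\varepsilon_Q\), the hypothesis also admits \(k=0\). There \cref{thm:explicit-deflated-rate} does not apply, and the conclusion can fail because the all-ones component \(a_0\) is unconstrained. So the corollary, like the paper's proof, must be read for \(k\ge1\), and you should say so explicitly rather than only noting that the ceiling may be nonpositive.
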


\section{Generalized deflated Q-VI}\label{sec:d-weighted-deflation}

The uniform residual average in~\cref{eq:deflated-vi} is convenient but not
essential.
Let
\[
d=(d_i)_{i\in\calS\times\calA}\in\Delta_{|\calS||\calA|},
\qquad
 d^\top\one=1,
\]
be a fixed state-action distribution, possibly with zero components.
In this section, we consider the more general deflated Q-VI
\begin{equation}
\label{eq:d-weighted-deflated-vi}
Q_{k+1} = F(Q_k)+\frac{\gamma}{1-\gamma} d^\top(F(Q_k)-Q_k)\one=T_d(Q_k), \qquad k\in\{0,1,2,\ldots\},
\end{equation}
where $T_d$ is the generalized deflated Q-VI map
\begin{equation}\label{eq:d-weighted-deflated-map}
T_d(Q):=F(Q) + \frac{\gamma}{1-\gamma} d^\top(F(Q)-Q)\one.
\end{equation}
Nonuniform choices of \(d\) are natural in randomized implementations
and reinforcement-learning settings, where state-action pairs may be sampled
according to a nonuniform visitation or exploration distribution.
When $d = \frac1{|\calS||\calA|}\one$, the update in~\cref{eq:d-weighted-deflated-vi} reduces to the deflated Q-VI in the previous section.
In the above mapping, the correction direction remains \(\one\). Replacing the correction direction \(\one\) by \(d\) generally does not remove the all-ones mode and can
change the JSR.

To proceed, let us define
\[
\boldsymbol{\Pi}_d:=I-\one d^\top
\]
and
\[
W_d:=\operatorname{span}(d)^\perp
=\{x\in\R^{|\calS||\calA|}:d^\top x=0\},
\]
which is the subspace of state-action vectors whose \(d\)-weighted average is zero. Since
\(d^\top\one=1\), we have
\[
\boldsymbol{\Pi}_d^2
=(I-\one d^\top)^2
=I-2\one d^\top+\one(d^\top\one)d^\top
=I-\one d^\top
=\boldsymbol{\Pi}_d.
\]
Thus applying \(\boldsymbol{\Pi}_d\) twice is the same as applying it once, so
\(\boldsymbol{\Pi}_d\) is a projection. Moreover,
\[
\operatorname{range}(\boldsymbol{\Pi}_d)=W_d,
\qquad
\ker(\boldsymbol{\Pi}_d)=\operatorname{span}(\one),
\]
because
\[
d^\top\boldsymbol{\Pi}_d x=d^\top x-d^\top\one\,d^\top x=0
\]
for every \(x\), and
\(\boldsymbol{\Pi}_d x=0\) holds exactly when \(x=(d^\top x)\one\). Therefore,
\[
\R^{|\calS||\calA|}=W_d\oplus\operatorname{span}(\one),
\]
and \(\boldsymbol{\Pi}_d\) maps a vector to its \(W_d\)-component while removing
its \(\operatorname{span}(\one)\)-component. This projection is generally
\emph{oblique}: the two subspaces in the direct sum need not be orthogonal. It
is orthogonal only in the special case where the hyperplane \(W_d\) is
perpendicular to \(\operatorname{span}(\one)\), which occurs when \(d\) is
proportional to \(\one\). This is the standard linear-algebra meaning of an
oblique projection~\cite{meyer2000matrix}.
In particular,
\[
\boldsymbol{\Pi}_d\one=0,
\qquad
d^\top\boldsymbol{\Pi}_d=0,
\]
and every error vector admits the unique decomposition
\[
e=a\one+z,
\qquad
 a=d^\top e,
\qquad
 z=\boldsymbol{\Pi}_d e\in W_d.
\]
Throughout this section, the symbols \(a\), \(z\), \(a_k\), and \(z_k\) refer
to this \(d\)-adapted decomposition; they are not the same coordinates as the
empirical-mean decomposition used in the preceding sections.
The identity \(z=\boldsymbol{\Pi}_d e\in W_d\) means that \(z\) is the part of the
error remaining after removing the uniform component \(a\one\). Indeed,
\[
z=\boldsymbol{\Pi}_d e=e-\one d^\top e=e-a\one,
\]
and therefore
\[
d^\top z=d^\top e-d^\top\one\, d^\top e=a-a=0.
\]
Thus \(z\) has zero \(d\)-average, while \(a\one\) is the uniform-shift
component measured by the same averaging functional \(d^\top\). The
decomposition is unique because applying \(d^\top\) to
\(e=a\one+z\) gives \(d^\top e=a\), since \(d^\top\one=1\) and
\(d^\top z=0\).

The next lemma shows that this
change of complement does not change the fixed point.
\begin{lemma}[Fixed point of the \(d\)-weighted deflated map]
\label{lem:d-weighted-fixed-point}
The map \(T_d\) in \cref{eq:d-weighted-deflated-map} has the unique fixed point
\(Q^\star\).
\end{lemma}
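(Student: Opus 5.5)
The plan is to prove the two directions separately: first that \(Q^\star\) is a fixed point of \(T_d\), and then that every fixed point coincides with \(Q^\star\). The only facts needed are the Bellman fixed-point property \(F(Q^\star)=Q^\star\), the shift identity \cref{eq:shift-identity}, and the normalization \(d^\top\one=1\).

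\textbf{\(Q^\star\) is a fixed point.} Since \(F(Q^\star)=Q^\star\), the residual \(F(Q^\star)-Q^\star\) vanishes. The correction term \(\frac{\gamma}{1-\gamma}d^\top(F(Q^\star)-Q^\star)\one\) is therefore zero, and \(T_d(Q^\star)=Q^\star\).

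\textbf{Uniqueness.} Suppose \(T_d(Q)=Q\), and write \(c:=\frac{\gamma}{1-\gamma}d^\top(F(Q)-Q)\). The fixed-point equation becomes
\[
F(Q)-Q=-c\one .
\]
Apply \(d^\top\) to both sides and use \(d^\top\one=1\). This gives \(d^\top(F(Q)-Q)=-c\). Substituting into the definition of \(c\) yields
\[
c=-\frac{\gamma}{1-\gamma}\,c,
\qquad\text{that is,}\qquad
c\Bigl(1+\frac{\gamma}{1-\gamma}\Bigr)=\frac{c}{1-\gamma}=0 .
\]
Hence \(c=0\), so \(F(Q)=Q\). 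Uniqueness of the Bellman fixed point then gives \(Q=Q^\star\).

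\textbf{Where the difficulty lies.} The only point needing care is the scalar consistency step. One must apply \(d^\top\) rather than the uniform average, and the coefficient \(1+\gamma/(1-\gamma)=1/(1-\gamma)\) must be nonzero. This holds because \(\gamma\in(0,1)\), and it is exactly where \(d^\top\one=1\) is used. The argument does not require \(d\) to have positive entries, so zero components of \(d\) are allowed.

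As an alternative, one could use the shift identity to write \(T_d(Q)=F(Q+\alpha\one)-\gamma\alpha\one+\dots\), but this is unnecessary; the direct scalar argument above suffices.
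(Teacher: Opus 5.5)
Your proof is correct and is essentially the paper's argument. The fixed-point equation forces the residual \(F(Q)-Q\) to be a multiple of \(\one\); applying \(d^\top\) with \(d^\top\one=1\) gives a scalar equation whose only solution is zero (your \(c\) is just \(\frac{\gamma}{1-\gamma}\) times the paper's \(\bar r_d\)); and uniqueness of the Bellman fixed point finishes the proof. You list the shift identity as an ingredient, but you never use it, so it can be dropped.
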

With uniqueness of the fixed point established, the corresponding switching system model can be derived in the oblique decomposition induced by \(d\).
\begin{proposition}
\label{prop:d-weighted-dynamics}
Let \(Q_k\) be generated by \cref{eq:d-weighted-deflated-vi}, and let
\(e_k=Q_k-Q^\star\). For the stochastic-policy representation in
\cref{lem:exact-representation}, define the \(d\)-adapted coordinates
\[
z_k:=\boldsymbol{\Pi}_d e_k,
\qquad
 a_k:=d^\top e_k.
\]
Then
\begin{align*}
 z_{k+1}
 &=
 \boldsymbol{\Pi}_d A_{\mu_k}\boldsymbol{\Pi}_d\, z_k,
 \qquad
 k\in\{0,1,2,\ldots\},
 \\
 a_{k+1}
 &=
 \ell_{\mu_k}z_k,
 \qquad
 \ell_\mu:=\frac{1}{1-\gamma}d^\top A_\mu,
 \qquad
 k\in\{0,1,2,\ldots\}.
\end{align*}
Equivalently,
\[
\begin{bmatrix}
 a_{k+1}\\ z_{k+1}
\end{bmatrix}
=
D_{\mu_k}
\begin{bmatrix}
 a_k\\ z_k
\end{bmatrix},
\qquad
D_{\mu}:=
\begin{bmatrix}
0 & \ell_\mu\\
0 & \boldsymbol{\Pi}_d A_\mu\boldsymbol{\Pi}_d
\end{bmatrix},
\qquad
k\in\{0,1,2,\ldots\},
\]
where the lower-right block is regarded as a linear map from \(W_d\) to
\(W_d\). Thus the all-ones component again has zero autonomous diagonal
dynamics.
\end{proposition}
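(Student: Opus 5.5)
The plan is to compute the error recursion directly from \cref{lem:exact-representation} and then read off the two coordinates using the algebraic identities of \(\boldsymbol{\Pi}_d\) established above: \(\boldsymbol{\Pi}_d\one=0\), \(d^\top\boldsymbol{\Pi}_d=0\), \(d^\top\one=1\), and the decomposition \(e=(d^\top e)\one+\boldsymbol{\Pi}_d e\).

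\textbf{Full error recursion.} Write \(e_k=Q_k-Q^\star\) and \(\mu_k:=\mu_{Q_k}\). \Cref{lem:exact-representation} gives \(F(Q_k)-Q^\star=A_{\mu_k}e_k\) and \(F(Q_k)-Q_k=(A_{\mu_k}-I)e_k\). Substituting these into \cref{eq:d-weighted-deflated-vi} yields
\[
e_{k+1}=A_{\mu_k}e_k+\frac{\gamma}{1-\gamma}\,d^\top(A_{\mu_k}-I)e_k\,\one .
\]

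\textbf{Transverse coordinate.} Apply \(\boldsymbol{\Pi}_d\) to this recursion. The correction term is a multiple of \(\one\), so it is annihilated, leaving \(z_{k+1}=\boldsymbol{\Pi}_d A_{\mu_k}e_k\). Now insert \(e_k=a_k\one+z_k\). By \cref{eq:common-ones-eigenrelation}, \(A_{\mu_k}\one=\gamma\one\), and \(\boldsymbol{\Pi}_d\one=0\), so the \(a_k\) contribution vanishes and \(z_{k+1}=\boldsymbol{\Pi}_d A_{\mu_k} z_k\). Since \(z_k\in W_d=\operatorname{range}(\boldsymbol{\Pi}_d)\) and \(\boldsymbol{\Pi}_d\) is idempotent, \(z_k=\boldsymbol{\Pi}_d z_k\). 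This gives \(z_{k+1}=\boldsymbol{\Pi}_d A_{\mu_k}\boldsymbol{\Pi}_d z_k\), and the lower-right block maps \(W_d\) into \(W_d\).

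\textbf{Scalar coordinate.} Apply \(d^\top\) to the recursion and use \(d^\top\one=1\):
\[
a_{k+1}=d^\top A_{\mu_k}e_k+\frac{\gamma}{1-\gamma}\bigl(d^\top A_{\mu_k}e_k-d^\top e_k\bigr)=\frac{1}{1-\gamma}d^\top A_{\mu_k}e_k-\frac{\gamma}{1-\gamma}a_k .
\]
Substituting \(e_k=a_k\one+z_k\) and \(d^\top A_{\mu_k}\one=\gamma\) gives
\[
a_{k+1}=\frac{\gamma a_k}{1-\gamma}+\frac{1}{1-\gamma}d^\top A_{\mu_k}z_k-\frac{\gamma a_k}{1-\gamma}=\ell_{\mu_k}z_k .
\]

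This cancellation of the \(a_k\) terms is the only delicate point. It requires the coefficient \(\gamma/(1-\gamma)\) together with the normalization \(d^\top\one=1\), and I expect it to be the main step to verify carefully. Combining the two coordinate recursions then gives the block form \(D_\mu\), whose \((1,1)\) entry is zero.
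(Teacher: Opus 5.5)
Your proposal is correct and follows essentially the same route as the paper. You derive the error recursion from \cref{lem:exact-representation}, apply $\boldsymbol{\Pi}_d$ to kill both the correction term and the $a_k\one$ contribution via $A_{\mu_k}\one=\gamma\one$, and apply $d^\top$ with $d^\top\one=1$ so that the two $\gamma a_k/(1-\gamma)$ terms cancel. The only difference is cosmetic: you replace $d^\top e_k$ by $a_k$ directly instead of invoking $d^\top z_k=0$ after substitution, and the two are equivalent.
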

Here \(a_k\), \(z_k\), \(\ell_\mu\), and \(D_\mu\) are the \(d\)-adapted
quantities used only in this section. They should be distinguished from the
empirical-mean coordinates and the uniform-deflation blocks introduced earlier.
The full error is
\[
e_k=Q_k-Q^\star=a_k\one+z_k,
\qquad
a_k=d^\top e_k,
\qquad
z_k=\boldsymbol{\Pi}_d e_k\in W_d .
\]

The projected block is now written on the zero-\(d\)-average subspace \(W_d\)
instead of the zero-sum subspace \(\operatorname{span}(\one)^\perp\). The following theorem shows that
these two projected families are similar, and therefore have the same JSR.

\begin{theorem}[JSR of distribution-weighted residual deflation]
\label{thm:d-weighted-jsr}
For any fixed \(d\in\Delta_{|\calS||\calA|}\), the associated full stochastic-policy
switched-family JSR of \cref{eq:d-weighted-deflated-vi} is
\[
\rho_d=\bar\rho\le\gamma.
\]
Consequently, the distribution-weighted deflated Q-VI map has the same
associated switched-family rate as the uniform deflated Q-VI map in
\cref{eq:deflated-vi}.
\end{theorem}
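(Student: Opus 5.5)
The plan has three steps: reduce $\rho_d$ to the JSR of the lower-right blocks, show that those blocks are similar to the orthogonal projected blocks $\bar A_\mu$, and then pass from stochastic to deterministic policies.

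\emph{Reduction to the projected block.} Here $\rho_d$ is the JSR of the family $\mathcal D_d:=\{D_\mu:\mu\in\mathcal M_{\rm st}\}$ from \cref{prop:d-weighted-dynamics}, viewed as linear maps on $\R\times W_d$. Each $D_\mu$ is block upper-triangular with diagonal blocks $0$ and $\boldsymbol{\Pi}_dA_\mu\boldsymbol{\Pi}_d|_{W_d}$. The family is bounded, since $\|A_\mu\|_\infty=\gamma$ and the projections are fixed. Hence \cref{lem:block-upper-triangular-jsr} gives
\[
\rho_d=\max\{0,\rho(\mathcal B_d)\}=\rho(\mathcal B_d),\qquad \mathcal B_d:=\{\boldsymbol{\Pi}_dA_\mu\boldsymbol{\Pi}_d|_{W_d}:\mu\in\mathcal M_{\rm st}\}.
\]
This is the same mechanism used for \cref{thm:deflated-jsr}.

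\emph{Similarity of the two quotient families.} Both $W_d$ and $\operatorname{span}(\one)^\perp$ are complements of $\operatorname{span}(\one)$, so each is a model of the quotient space $\R^{|\calS||\calA|}/\operatorname{span}(\one)$. Consider
\[
S:=\boldsymbol{\Pi}_d|_{\operatorname{span}(\one)^\perp}:\operatorname{span}(\one)^\perp\to W_d,\qquad S^{-1}=\proj|_{W_d}.
\]
These maps are mutually inverse. Indeed, for $z\perp\one$ we have $\boldsymbol{\Pi}_d z=z-(d^\top z)\one$, and then $\proj\boldsymbol{\Pi}_d z=\proj z=z$ because $\proj\one=0$. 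The reverse composition works the same way, using $\boldsymbol{\Pi}_d\one=0$.

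The key identity comes from the invariance $A_\mu\one=\gamma\one$ in \cref{eq:common-ones-eigenrelation}. Since $\ker\boldsymbol{\Pi}_d=\ker\proj=\operatorname{span}(\one)$ and both projections are idempotent, we get
\[
\boldsymbol{\Pi}_dA_\mu\boldsymbol{\Pi}_d=\boldsymbol{\Pi}_dA_\mu,\qquad \proj A_\mu\proj=\proj A_\mu,
\]
as well as $\boldsymbol{\Pi}_d\proj=\boldsymbol{\Pi}_d$ and $\proj\boldsymbol{\Pi}_d=\proj$. For the first identity, note that $\boldsymbol{\Pi}_d x-x\in\operatorname{span}(\one)$, that $A_\mu$ maps $\operatorname{span}(\one)$ into itself, and that $\boldsymbol{\Pi}_d$ kills $\operatorname{span}(\one)$. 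The second identity follows in the same way. Then for $w\in W_d$,
\[
S\bar A_\mu S^{-1}w=\boldsymbol{\Pi}_d\proj A_\mu\proj w=\boldsymbol{\Pi}_dA_\mu w=\boldsymbol{\Pi}_dA_\mu\boldsymbol{\Pi}_d w .
\]
So every element of $\mathcal B_d$ is conjugate to the corresponding $\bar A_\mu$ by the single fixed isomorphism $S$. Products then conjugate as well:
\[
\|B_{k}\cdots B_1\|\le\|S\|\|S^{-1}\|\,\|\bar A_{k}\cdots\bar A_1\|,
\]
and symmetrically in the other direction. Taking $k$-th roots gives $\rho(\mathcal B_d)=\rho(\bar\calH_{\rm st})$.

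\emph{From stochastic to deterministic policies.} It remains to show $\rho(\bar\calH_{\rm st})=\bar\rho$. The map $\mu\mapsto A_\mu=\gamma P\Pi^\mu$ is affine in the statewise action distributions. A stochastic policy is a product of simplices, whose extreme points are exactly the deterministic policies, so every $A_\mu$ is a convex combination of the matrices $A_\pi$, $\pi\in\Theta$. Since conjugation by $\proj$ is linear, $\bar\calH_{\rm st}\subseteq\co(\bar\calH)$. The reverse inclusion $\bar\calH\subseteq\bar\calH_{\rm st}$ is trivial. Therefore \cref{lem:jsr-convexification}, applied to the finite family $\bar\calH$ on $\operatorname{span}(\one)^\perp$, yields $\rho(\bar\calH_{\rm st})=\bar\rho$, using monotonicity of the JSR under inclusion. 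Finally, \cref{lem:restricted-jsr-bound} gives $\bar\rho\le\gamma$, and comparing with \cref{thm:deflated-jsr} gives the "consequently" clause.

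The main obstacle is the similarity step. It must hold exactly, even though $\boldsymbol{\Pi}_d$ is oblique and $d$ may have zero entries. The delicate point is that the two projections do not commute with $A_\mu$ in general; only the common invariance of $\operatorname{span}(\one)$ makes the inner projections absorbable. I would verify the identities $\boldsymbol{\Pi}_dA_\mu\boldsymbol{\Pi}_d=\boldsymbol{\Pi}_dA_\mu$ and $\proj A_\mu\proj=\proj A_\mu$ carefully first, since the rest of the argument is bookkeeping on top of them.
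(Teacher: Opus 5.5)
Your proposal is correct and follows essentially the same route as the paper's proof. Both arguments use the block upper-triangular reduction to the lower-right family on $W_d$, the similarity $S_d=\boldsymbol{\Pi}_d|_{\operatorname{span}(\one)^\perp}$ with inverse $\proj|_{W_d}$ (via $\boldsymbol{\Pi}_d\proj=\boldsymbol{\Pi}_d$ and the common invariance $A_\mu\one=\gamma\one$), and convexification to pass between stochastic and deterministic policies; the only difference is that you apply the similarity to the stochastic-policy family and convexify afterwards, while the paper proves it for deterministic policies and then convexifies, which is an inessential reordering.
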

\section{Numerical illustrations}
\label{sec:numerical-illustrations}

This section illustrates the effect of deflated Q-VI recentering on finite
discounted MDPs. Three examples are used: a small synthetic MDP, a stochastic
\texttt{FrozenLake} benchmark, and the standard \texttt{Taxi-v3} benchmark. The
examples isolate the slow common-shift mode of standard Q-VI and show that the
residual correction removes this mode while preserving the optimal fixed point.
Because \cref{lem:scalar-shift-equivalence} shows that the greedy-policy
sequence is unchanged relative to standard Q-VI, the plots focus on full
Q-function error rather than policy-identification speed.

First, consider a small MDP with four states,
\(\mathcal S = \{1,2,3,4\}\), two actions, \(\mathcal A = \{1,2\}\), and
discount factor \(\gamma = 0.95\). The reward function is defined by
\[
R =
\begin{bmatrix}
0.7666 & 1.3368 \\
0.0000 & 0.0000 \\
0.0000 & 0.0000 \\
1.0406 & 0.0000
\end{bmatrix},
\]
where the entry in row \(s\) and column \(a\) is \(R(s,a)\). The transition
kernel is represented by two transition matrices \(P_1\) and \(P_2\), one for
each action:
\[
P_1 =
\begin{bmatrix}
1.0000 & 0.0000 & 0.0000 & 0.0000 \\
0.0000 & 0.0000 & 0.0000 & 1.0000 \\
0.6074 & 0.0000 & 0.3926 & 0.0000 \\
0.0000 & 0.0000 & 0.6765 & 0.3235
\end{bmatrix},
\qquad
P_2 =
\begin{bmatrix}
0.5483 & 0.4517 & 0.0000 & 0.0000 \\
1.0000 & 0.0000 & 0.0000 & 0.0000 \\
1.0000 & 0.0000 & 0.0000 & 0.0000 \\
0.0000 & 0.0000 & 1.0000 & 0.0000
\end{bmatrix},
\]
where the \(s\)-th row of \(P_a\) gives the distribution over next states when
action \(a\) is chosen at state \(s\).

Three methods are compared: standard Q-VI, deflated Q-VI with the uniform
distribution, and deflated Q-VI with a nonuniform distribution. The optimal
Q-function, computed by policy iteration, is
\[
Q^\star =
\begin{bmatrix}
18.5393 & 18.7081 \\
17.0925 & 17.7727 \\
17.4238 & 17.7727 \\
17.9921 & 16.8840
\end{bmatrix}.
\]

Figure~\ref{fig:example-full-error} plots the full Q-function error
\(\|Q_k-Q^\star\|_\infty\) over \(80\) iterations. Standard Q-VI has empirical
tail rate \(0.95\), matching the discount factor \(\gamma\), because the
dominant remaining error is the common-shift component. Both deflated variants
eliminate this slow mode and reduce the error by many orders of magnitude
within a small number of iterations.

\begin{figure}[t]
\centering
\includegraphics[width=0.72\linewidth]{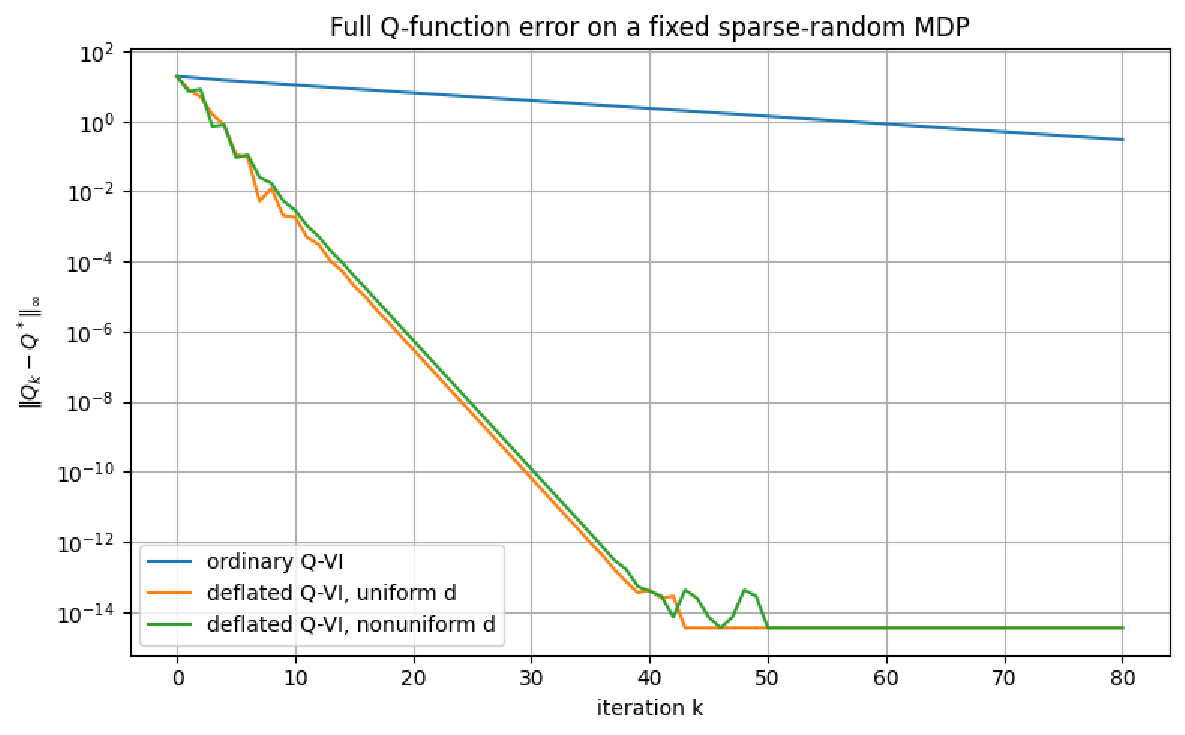}
\caption{
Full Q-function error for standard Q-VI and deflated Q-VI with uniform and
nonuniform distributions. The deflated Q-VI updates remove the slow all-ones
component and converge substantially faster than standard Q-VI.
}
\label{fig:example-full-error}
\end{figure}

Although the greedy policy induced by the initialization is already optimal in
this example, the value error remains large for standard Q-VI because of the common
shift. This demonstrates that policy accuracy and value accuracy can behave
differently, and that deflated Q-VI specifically accelerates
convergence of the Q-function itself.

Next, standard Q-VI is compared with deflated Q-VI on a standard
stochastic \texttt{FrozenLake} benchmark. In the experiment, \(\gamma=0.99\),
the distribution \(d\) is uniform over state-action pairs, and the initial
condition is \(Q_0=10\mathbf 1\).

Figure~\ref{fig:frozenlake-full-q-error} shows the evolution of the Q-function
error \(\|Q_k-Q^\star\|_\infty\) on a logarithmic scale. The horizontal axis is
the iteration index \(k\), and the vertical axis is the sup-norm error with
respect to the optimal Q-function \(Q^\star\).

The deflated Q-VI update decreases the full error much faster than standard
Q-VI. In particular, the standard iteration exhibits a slow decay phase, while
the deflated Q-VI iteration maintains a significantly steeper convergence
trend. This behavior is consistent with the fact that the residual correction
removes the slowly decaying component aligned with the all-ones direction. As a
result, the full Q-function itself approaches \(Q^\star\) substantially faster.

Thus, the \texttt{FrozenLake} experiment illustrates the main advantage of the
deflated Q-VI update: although both methods are based on the same Bellman
optimality operator, the deflation term accelerates convergence in the full
sup-norm sense by eliminating the residual constant-shift component.

\begin{figure}[t]
\centering
\includegraphics[width=0.72\linewidth]{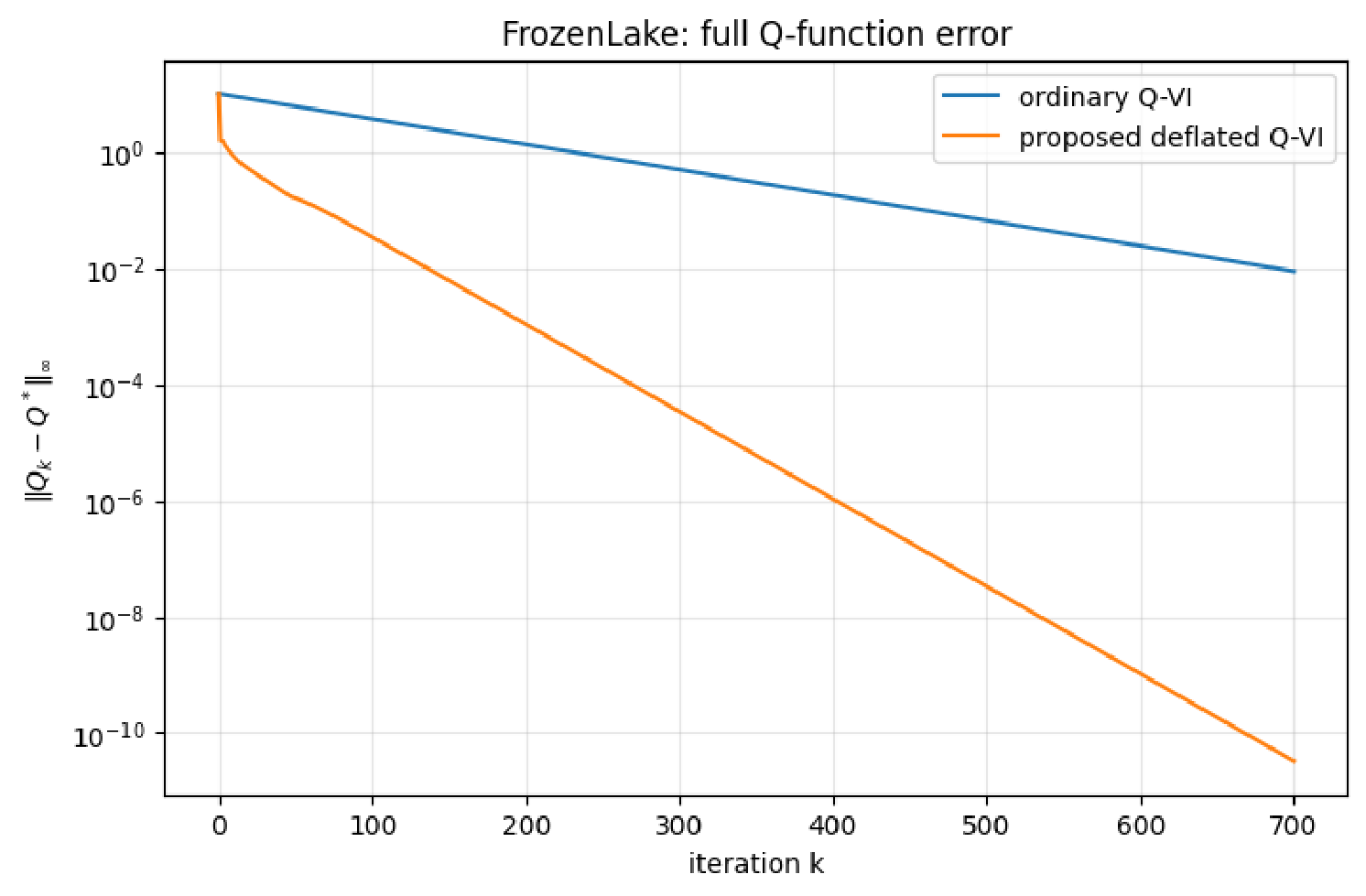}
\caption{
Convergence of the full Q-function error
\(\|Q_k-Q^\star\|_\infty\) for standard Q-VI and deflated Q-VI on the
\texttt{FrozenLake} benchmark. The deflated Q-VI update achieves a
substantially faster decay of the full sup-norm error.
}
\label{fig:frozenlake-full-q-error}
\end{figure}

Finally, the same deflation idea is evaluated on the standard \texttt{Taxi-v3}
benchmark. This example is larger and more structured than the preceding small
MDP and \texttt{FrozenLake} examples, and therefore provides a useful test of
whether the acceleration effect persists in a more practical finite-state
control problem. Standard Q-VI is compared with deflated Q-VI, and the
full Q-function error \(\|Q_k-Q^\star\|_\infty\) is measured with respect to a
numerically computed optimal Q-function \(Q^\star\).

Figure~\ref{fig:taxi-full-q-error} plots the error over \(700\) iterations on a
logarithmic scale. The standard Q-VI curve decreases steadily but slowly,
remaining visibly above the numerical precision level even after hundreds of
iterations. This slow tail behavior is again caused by the constant-shift
component, which is contracted only at the discount-factor rate.

In contrast, deflated Q-VI removes this slow mode. After a short
initial transient, the error drops rapidly to approximately machine precision,
around \(10^{-13}\), and remains at that level for the rest of the iterations.
Thus, on \texttt{Taxi-v3}, the separation between the two methods is especially
clear: the standard iteration still has a non-negligible full Q-function error
at the end of the plotted horizon, whereas the deflated Q-VI iteration has already
reached the numerical accuracy floor.

This experiment further supports the central claim of the deflated Q-VI
update. The deflation does not change the Bellman optimality fixed point, but it
removes the slowly decaying all-ones component from the iteration. Consequently,
the full Q-function converges to \(Q^\star\) substantially faster in the
sup-norm sense.

\begin{figure}[t]
\centering
\includegraphics[width=0.72\linewidth]{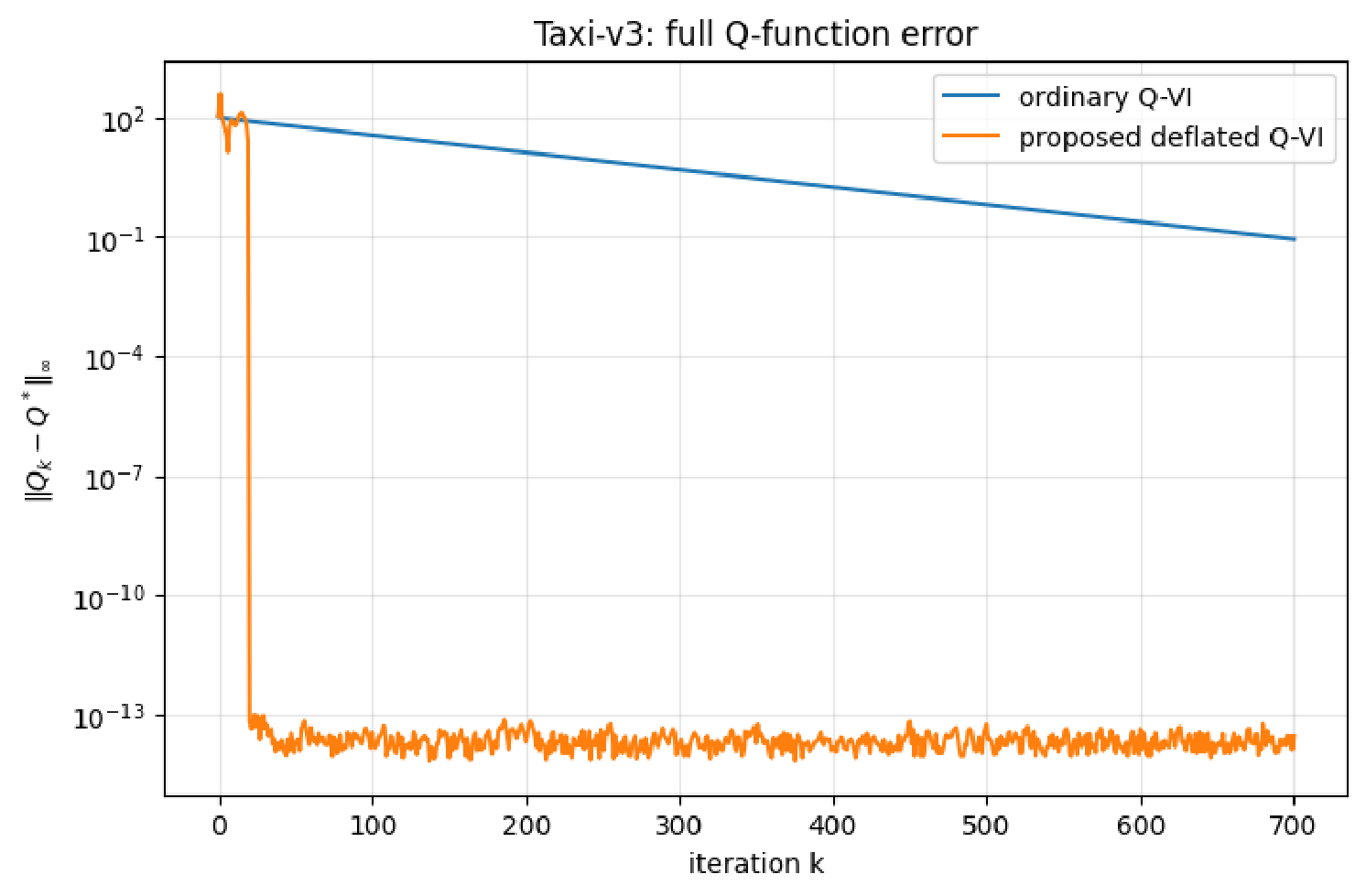}
\caption{
Full Q-function error
\(\|Q_k-Q^\star\|_\infty\) for standard Q-VI and deflated Q-VI on the
\texttt{Taxi-v3} benchmark. Standard Q-VI exhibits a slow tail decay, while the
deflated Q-VI update removes the common-shift mode and reaches numerical
precision after a short transient.
}
\label{fig:taxi-full-q-error}
\end{figure}

\section{Conclusion}

This paper developed a joint spectral radius framework for rank-one deflated
Q-VI in discounted MDP control. The analysis shows that the standard Q-VI
switching family has JSR exactly equal to the discount factor \(\gamma\), because
the all-ones vector is a common invariant direction of every subsystem. After
this direction is removed, the relevant transverse dynamics are governed by the
projected switching family with projected JSR \(\bar\rho\), which can be strictly
smaller than \(\gamma\).

The rank-one residual correction removes the autonomous evolution of the
all-ones component from the associated deflated switching system, so the
resulting switched-family rate is \(\bar\rho\). The same projected rate is
preserved when the residual average is taken with respect to any fixed
state-action distribution, through an oblique-projection similarity argument.
At the same time, the deflated iterate differs from the standard Q-VI iterate,
initialized at the same point, only by a scalar multiple of \(\one\). Therefore,
the projected trajectory and the greedy-policy sequence are unchanged. The main
benefit of deflation is thus not a different decision-making trajectory, but a
sharper JSR-based description of the convergence geometry and a value-error
recentering mechanism for full Q-function convergence.

\appendix

\section{Proof of convexification invariance of the JSR}
\label{app:proof-jsr-convexification}

\noindent\textbf{Restatement of \cref{lem:jsr-convexification}.}
For every finite matrix family \(\calH\),
\[
\rho(\co(\calH))=\rho(\calH).
\]

\begin{proof}
Fix the same arbitrary submultiplicative matrix norm \(\|\cdot\|\) as in
\cref{def:jsr}. Since \(\calH\subseteq\co(\calH)\),
\(\rho(\calH)\le\rho(\co(\calH))\). It remains to prove the reverse inequality.

Fix a product length \(k\ge1\). Since \(\calH\) is finite, write
\(\calH=\{H_1,\ldots,H_M\}\). For arbitrary
\(B_1,\ldots,B_k\in\co(\calH)\), there exist coefficients
\(\lambda_{j,i}\ge0\), \(i=1,\ldots,M\), satisfying
\(\sum_{i=1}^M\lambda_{j,i}=1\), such that
\[
B_j=\sum_{i=1}^M\lambda_{j,i}H_i,
\qquad j=1,\ldots,k.
\]
Expanding the product gives
\begin{align*}
B_k\cdots B_1
&=
\left(\sum_{i_k=1}^M\lambda_{k,i_k}H_{i_k}\right)
\cdots
\left(\sum_{i_1=1}^M\lambda_{1,i_1}H_{i_1}\right) \\
&=
\sum_{i_1=1}^M\cdots\sum_{i_k=1}^M
\left(\prod_{j=1}^k\lambda_{j,i_j}\right)
H_{i_k}\cdots H_{i_1}.
\end{align*}
The coefficients in this expansion form a convex combination because
\[
\sum_{i_1=1}^M\cdots\sum_{i_k=1}^M
\prod_{j=1}^k\lambda_{j,i_j}
=
\prod_{j=1}^k\sum_{i_j=1}^M\lambda_{j,i_j}
=1 .
\]
Define the finite-length product maximum
\[
M_k(\calH):=
\max_{A_1,\ldots,A_k\in\calH}\|A_k\cdots A_1\|.
\]
The maximum exists because \(\calH\) is finite. By convexity of the norm,
\begin{align*}
\|B_k\cdots B_1\|
&\le
\sum_{i_1=1}^M\cdots\sum_{i_k=1}^M
\left(\prod_{j=1}^k\lambda_{j,i_j}\right)
\|H_{i_k}\cdots H_{i_1}\| \\
&\le
\sum_{i_1=1}^M\cdots\sum_{i_k=1}^M
\left(\prod_{j=1}^k\lambda_{j,i_j}\right) M_k(\calH) \\
&=M_k(\calH).
\end{align*}
Thus, for every \(B_1,\ldots,B_k\in\co(\calH)\),
\[
\|B_k\cdots B_1\|
\le
\max_{A_1,\ldots,A_k\in\calH}\|A_k\cdots A_1\|.
\]
Taking the supremum over \(B_1,\ldots,B_k\in\co(\calH)\) gives
\[
\sup_{B_1,\ldots,B_k\in\co(\calH)}\|B_k\cdots B_1\|
\le
\max_{A_1,\ldots,A_k\in\calH}\|A_k\cdots A_1\|.
\]
Taking the \(k\)-th root and letting \(k\to\infty\) in the definition of the JSR
yields \(\rho(\co(\calH))\le\rho(\calH)\). Therefore the two JSRs are equal.
\end{proof}

\section{Proof of the uniform exponential product bound}
\label{app:proof-jsr-product-bound}

\noindent\textbf{Restatement of \cref{lem:jsr-product-bound}.}
Let \(\|\cdot\|\) be any fixed submultiplicative matrix norm. If
\(\rho({\calH})<1\), then every switched product is uniformly exponentially
stable in that norm: for every \(\varepsilon\in(0,1-\rho(\calH))\), with
\[
\beta_\varepsilon:=\rho(\calH)+\varepsilon,
\]
there exists \(C_{\beta_\varepsilon}>0\), depending on
\(\beta_\varepsilon\) and on the chosen norm, such that for every \(k\ge0\) and
every switching word \(\sigma_0,\ldots,\sigma_{k-1}\) with
\[
A_j:=A_{\sigma_j}\in\calH,
\qquad j=0,\ldots,k-1,
\]
one has
\[
\left\|A_{k-1}\cdots A_0\right\|
\le
C_{\beta_\varepsilon}\beta_\varepsilon^k,
\]
where the product is interpreted as the identity when \(k=0\).

\begin{proof}
Fix the chosen submultiplicative matrix norm and define
\[
M_k:=
\sup_{A_0,\ldots,A_{k-1}\in\calH}
\left\|A_{k-1}\cdots A_0\right\|,
\qquad
M_0:=1 .
\]
By the definition of the JSR,
\[
\lim_{k\to\infty} M_k^{1/k}=\rho(\calH).
\]
Fix \(\varepsilon\in(0,1-\rho(\calH))\) and define
\(\beta_\varepsilon:=\rho(\calH)+\varepsilon\). Since the limit is strictly
smaller than \(\beta_\varepsilon\), there exists an integer
\(N_{\beta_\varepsilon}\) such that
\[
M_k^{1/k}\le \beta_\varepsilon,
\qquad
\forall k\ge N_{\beta_\varepsilon}.
\]
Equivalently, \(M_k\le \beta_\varepsilon^k\) for all
\(k\ge N_{\beta_\varepsilon}\). For the finitely many shorter lengths, define
\[
C_{\beta_\varepsilon}:=
\max\left\{1,\max_{0\le k<N_{\beta_\varepsilon}}\frac{M_k}{\beta_\varepsilon^k}\right\}.
\]
Then \(C_{\beta_\varepsilon}<\infty\). If \(k<N_{\beta_\varepsilon}\), the
definition of \(C_{\beta_\varepsilon}\) gives
\(M_k\le C_{\beta_\varepsilon}\beta_\varepsilon^k\). If
\(k\ge N_{\beta_\varepsilon}\), then
\(M_k\le \beta_\varepsilon^k\le C_{\beta_\varepsilon}\beta_\varepsilon^k\).
Therefore, for every length \(k\ge0\) and every switching word
\(\sigma_0,\ldots,\sigma_{k-1}\), with selected matrices
\(A_j:=A_{\sigma_j}\in\calH\),
\[
\left\|A_{k-1}\cdots A_0\right\|
\le M_k
\le C_{\beta_\varepsilon}\beta_\varepsilon^k .
\]
This proves the claim.
\end{proof}

\section{Proof of the block upper-triangular JSR formula}
\label{app:proof-block-upper-triangular-jsr}

\noindent\textbf{Restatement of \cref{lem:block-upper-triangular-jsr}.}
Let
\[
\mathcal M:=
\left\{
\begin{bmatrix}
B_i & C_i\\
0 & D_i
\end{bmatrix}:i\in\mathcal I
\right\}
\]
be a bounded family of block upper-triangular matrices. Let
\(\mathcal B:=\{B_i:i\in\mathcal I\}\) and
\(\mathcal D:=\{D_i:i\in\mathcal I\}\). Then
\[
\rho(\mathcal M)=\max\{\rho(\mathcal B),\rho(\mathcal D)\}.
\]

\begin{proof}
This is a standard property of block triangular matrix families. The proof is
included to make clear why the off-diagonal blocks do not affect the exponential
rate. Use a block-compatible submultiplicative norm, for example the operator
norm induced by \(\|(x,y)\|:=\max\{\|x\|,\|y\|\}\). Since the JSR is independent
of the chosen submultiplicative norm, this choice is without loss of generality.

For the lower bound, fix a switching word \(i_0,\ldots,i_{k-1}\) and write
\[
M_{i_j}:=
\begin{bmatrix}
B_{i_j} & C_{i_j}\\
0 & D_{i_j}
\end{bmatrix}.
\]
The product remains block upper triangular:
\[
M_{i_{k-1}}\cdots M_{i_0}
=
\begin{bmatrix}
B_{i_{k-1}}\cdots B_{i_0} & *\\
0 & D_{i_{k-1}}\cdots D_{i_0}
\end{bmatrix}.
\]
With the chosen block norm,
\[
\left\|M_{i_{k-1}}\cdots M_{i_0}\right\|
\ge
\left\|B_{i_{k-1}}\cdots B_{i_0}\right\|,
\qquad
\left\|M_{i_{k-1}}\cdots M_{i_0}\right\|
\ge
\left\|D_{i_{k-1}}\cdots D_{i_0}\right\|.
\]
Taking the supremum over switching words, then the \(k\)-th root and the limit,
gives
\[
\rho(\mathcal M)\ge \rho(\mathcal B),
\qquad
\rho(\mathcal M)\ge \rho(\mathcal D).
\]
Thus
\[
\rho(\mathcal M)\ge \max\{\rho(\mathcal B),\rho(\mathcal D)\}.
\]

For the upper bound, fix any
\[
q>\max\{\rho(\mathcal B),\rho(\mathcal D)\}.
\]
Applying \cref{lem:jsr-product-bound} to the scaled families
\(q^{-1}\mathcal B\) and \(q^{-1}\mathcal D\) gives constants
\(K_B,K_D\ge1\) such that every length-\(m\) product satisfies
\[
\left\|B_{i_{m-1}}\cdots B_{i_0}\right\|\le K_B q^m,
\qquad
\left\|D_{i_{m-1}}\cdots D_{i_0}\right\|\le K_D q^m,
\qquad
m\ge0,
\]
where the length-zero product is the identity. Since \(\mathcal M\) is bounded,
\[
K_C:=\sup_{i\in\mathcal I}\|C_i\|<\infty.
\]
For a length-\(k\) product, direct multiplication gives
\[
M_{i_{k-1}}\cdots M_{i_0}
=
\begin{bmatrix}
B_{i_{k-1}}\cdots B_{i_0} & E_k\\
0 & D_{i_{k-1}}\cdots D_{i_0}
\end{bmatrix},
\]
where, with empty products interpreted as identity matrices,
\[
E_k=
\sum_{j=0}^{k-1}
B_{i_{k-1}}\cdots B_{i_{j+1}}
C_{i_j}
D_{i_{j-1}}\cdots D_{i_0}.
\]
Therefore,
\begin{align*}
\|E_k\|
&\le
\sum_{j=0}^{k-1}
\left\|B_{i_{k-1}}\cdots B_{i_{j+1}}\right\|
\|C_{i_j}\|
\left\|D_{i_{j-1}}\cdots D_{i_0}\right\| \\
&\le
\sum_{j=0}^{k-1}
K_B q^{k-1-j} K_C K_D q^j \\
&=
K_BK_CK_D\, k\, q^{k-1}.
\end{align*}
The diagonal blocks also satisfy
\[
\left\|B_{i_{k-1}}\cdots B_{i_0}\right\|\le K_Bq^k,
\qquad
\left\|D_{i_{k-1}}\cdots D_{i_0}\right\|\le K_Dq^k.
\]
Hence, for a constant \(K_q>0\) independent of the switching word and of \(k\),
\[
\left\|M_{i_{k-1}}\cdots M_{i_0}\right\|
\le
K_q (k+1)q^k.
\]
Taking the supremum over all length-\(k\) switching words yields
\[
\sup_{i_0,\ldots,i_{k-1}}
\left\|M_{i_{k-1}}\cdots M_{i_0}\right\|^{1/k}
\le
\bigl(K_q(k+1)\bigr)^{1/k}q.
\]
Letting \(k\to\infty\) gives \(\rho(\mathcal M)\le q\). Since this holds for
every \(q>\max\{\rho(\mathcal B),\rho(\mathcal D)\}\), letting
\(q\downarrow\max\{\rho(\mathcal B),\rho(\mathcal D)\}\) gives
\[
\rho(\mathcal M)\le\max\{\rho(\mathcal B),\rho(\mathcal D)\}.
\]
Combining the lower and upper bounds proves the claim.
\end{proof}

\section{Proof of the full Q-VI switching JSR}
\label{app:proof-qvi-full-jsr}

\noindent\textbf{Restatement of \cref{prop:qvi-full-jsr}.}
The deterministic switching family \(\calH=\{A_\pi:\pi\in\Theta\}\) satisfies
\[
\rho(\calH)=\gamma.
\]

\begin{proof}
For every deterministic policy \(\pi\), the matrix \(P\Pi^\pi\) is
row-stochastic. Hence \(A_\pi\one=\gamma\one\) and
\(\|A_\pi\|_\infty=\gamma\). Therefore, for every switching word
\(\pi_0,\ldots,\pi_{k-1}\),
\[
\|A_{\pi_{k-1}}\cdots A_{\pi_0}\|_\infty\le \gamma^k,
\]
which implies \(\rho(\calH)\le\gamma\).

For the reverse inequality, the same induced infinity norm gives
\[
A_{\pi_{k-1}}\cdots A_{\pi_0}\one=\gamma^k\one
\]
for every switching word. Thus
\[
\|A_{\pi_{k-1}}\cdots A_{\pi_0}\|_\infty
\ge
\frac{\|A_{\pi_{k-1}}\cdots A_{\pi_0}\one\|_\infty}{\|\one\|_\infty}
=
\gamma^k .
\]
Taking the supremum over switching words, the \(k\)-th root, and the limit in
\cref{def:jsr} yields \(\rho(\calH)\ge\gamma\). Hence
\(\rho(\calH)=\gamma\).
\end{proof}

\section{Proof of the exact stochastic-policy error representation}
\label{app:proof-exact-representation}

\noindent\textbf{Restatement of \cref{lem:exact-representation}.}
For every \(Q\in\R^{|\calS||\calA|}\), there exists a stochastic policy
\(\mu_Q:\calS\to\Delta_{|\calA|}\) such that
\[
F(Q)-Q^\star
=
A_{\mu_Q}(Q-Q^\star).
\]
Consequently, with \(e:=Q-Q^\star\),
\[
F(Q)-Q=(A_{\mu_Q}-I)e.
\]
In particular, the statement applies to standard Q-VI iterates and to the
deflated Q-VI iterates studied below.

\begin{proof}
Let \(e(s,a):=Q(s,a)-Q^\star(s,a)\). For each state \(s\), let us define
\[
\delta_s:=\max_{a\in\calA}Q(s,a)-\max_{a\in\calA}Q^\star(s,a).
\]
It is first shown that \(\delta_s\) belongs to the convex hull of
\(\{e(s,a):a\in\calA\}\). To this end, choose
\(i\in\Argmax_{a\in\calA} Q(s,a)\) and \(j\in\Argmax_{a\in\calA} Q^\star(s,a)\). Since
\(Q(s,j)\le Q(s,i)\) and \(Q^\star(s,i)\le Q^\star(s,j)\),
\[
e(s,j)
=Q(s,j)-Q^\star(s,j)
\le
Q(s,i)-Q^\star(s,j)
=
\delta_s
\]
and
\[
\delta_s
=
Q(s,i)-Q^\star(s,j)
\le
Q(s,i)-Q^\star(s,i)
=
e(s,i).
\]
Thus \(\delta_s\) lies between two action-error values and can be written as a
convex combination of action errors at state \(s\). Hence there exists
\(\mu_Q(\cdot\mid s)\in\Delta_{|\calA|}\) such that
\[
\delta_s=\sum_{a\in\calA}\mu_Q(a\mid s)e(s,a).
\]
Constructing this stochastic vector independently for each state gives a
stochastic policy \(\mu_Q\) with
\[
\max_{a\in\calA}Q(s,a)-\max_{a\in\calA} Q^\star(s,a)=(\Pi^{\mu_Q}e)(s),
\qquad \forall s\in\calS.
\]
Since \(Q^\star=F(Q^\star)\), we have
\begin{align*}
F(Q)-Q^\star
&=F(Q)-F(Q^\star)\\
&=\gamma P\Pi^{\mu_Q}(Q-Q^\star)\\
&=\gamma P\Pi^{\mu_Q}e\\
&=A_{\mu_Q}e.
\end{align*}
Subtracting \(Q=Q^\star+e\) from both sides gives
\(F(Q)-Q=(A_{\mu_Q}-I)e\).
\end{proof}

\section{Proof of the empirical orthogonal error decomposition}
\label{app:proof-empirical-orthogonal-error-decomposition}

\noindent\textbf{Restatement of \cref{lem:empirical-orthogonal-error-decomposition}.}
For any error vector
\(e_k\in\R^{|\calS||\calA|}\), define \(z_k\) and \(a_k\) as in
\cref{eq:empirical-error-decomposition}:
\[
z_k:=\proj e_k,
\qquad
a_k:=\frac1{|\calS||\calA|}\one^\top e_k.
\]
Then \(z_k\in \operatorname{span}(\one)^\perp\), \(a_k\) is the empirical mean of \(e_k\), and
\[
e_k=a_k\one+z_k = a_k\one+\proj e_k.
\]
Moreover, this decomposition is unique: if
\(e_k=\alpha\one+w\) with \(w\in \operatorname{span}(\one)^\perp\), then
\(\alpha=a_k\) and \(w=z_k\).

\begin{proof}
By the definition of \(\proj\),
\[
z_k
=
\left(I-\frac1{|\calS||\calA|}\one\one^\top\right)e_k
=
e_k-\left(\frac1{|\calS||\calA|}\one^\top e_k\right)\one
=
e_k-a_k\one .
\]
Therefore \(e_k=a_k\one+z_k\). Also,
\[
\one^\top z_k
=
\one^\top e_k
-
a_k\,\one^\top\one
=
\one^\top e_k
-
\frac1{|\calS||\calA|}\one^\top e_k \cdot |\calS||\calA|
=
0,
\]
so \(z_k\in \operatorname{span}(\one)^\perp\). Finally, if \(e_k=\alpha\one+w\) with
\(w\in \operatorname{span}(\one)^\perp\), then applying \((|\calS||\calA|)^{-1}\one^\top\) gives
\(\alpha=(|\calS||\calA|)^{-1}\one^\top e_k=a_k\), and hence
\(w=e_k-a_k\one=z_k\). This proves the decomposition used in
\cref{eq:empirical-error-decomposition}.
\end{proof}

\section{Proof of the projected error recursion for standard Q-VI}
\label{app:proof-ordinary-projected-error-recursion}

\noindent\textbf{Restatement of \cref{lem:ordinary-projected-error-recursion}.}
The standard Q-VI recursion satisfies
\[
z_{k+1}=\bar A_{\mu_k}z_k,
\qquad
k\in\{0,1,2,\ldots\}.
\]

\begin{proof}
Standard Q-VI gives
\[
e_{k+1}=F(Q_k)-Q^\star=A_{\mu_k}e_k,
\qquad
k\in\{0,1,2,\ldots\}.
\]
by \cref{lem:exact-representation}. Applying \(\proj\) to both sides and using
\cref{eq:empirical-error-decomposition,eq:common-ones-eigenrelation,eq:projection-kills-one}
gives
\begin{align*}
z_{k+1}
&=\proj e_{k+1} \\
&=\proj A_{\mu_k}(a_k\one+z_k) \\
&=a_k\proj A_{\mu_k}\one+\proj A_{\mu_k}z_k \\
&=a_k\gamma\proj\one+\proj A_{\mu_k}z_k \\
&=\proj A_{\mu_k}z_k.
\end{align*}
Since \(z_k=\proj e_k\), the vector \(z_k\) already lies in
\(\operatorname{span}(\one)^\perp\), and hence \(\proj z_k=z_k\). Therefore
\[
\proj A_{\mu_k}z_k
=
\proj A_{\mu_k}\proj z_k
=
\bar A_{\mu_k}z_k.
\]
Thus the averaged all-ones component \(a_k\one\) has no effect on the projected
error recursion; it is killed by the output projection because
\(A_{\mu_k}\one=\gamma\one\) remains in the all-ones direction.
\end{proof}

\section{Proof of the projected JSR bound}
\label{app:proof-restricted-jsr-bound}

\noindent\textbf{Restatement of \cref{lem:restricted-jsr-bound}.}
The projected JSR satisfies
\[
\bar\rho\le\gamma.
\]

\begin{proof}
We follow the projected-product argument used in
Lee~\cite[Lemma~8]{lee2026beyond}. For every deterministic policy
\(\pi\), \(A_\pi\one=\gamma\one\). Hence
\[
\proj A_\pi(I-\proj)=0,
\qquad
\proj A_\pi\proj=\proj A_\pi .
\]
Therefore, for every switching word
\(\pi_0,\ldots,\pi_{k-1}\), the projected product satisfies
\[
\bar A_{\pi_{k-1}}\cdots \bar A_{\pi_0}
=
\proj A_{\pi_{k-1}}\cdots A_{\pi_0}\proj .
\]
Indeed, the identity is immediate for \(k=1\). If it holds for a product of
length \(k\), then
\begin{align*}
\bar A_{\pi_k}\bar A_{\pi_{k-1}}\cdots \bar A_{\pi_0}
&=
\proj A_{\pi_k}\proj\,
\proj A_{\pi_{k-1}}\cdots A_{\pi_0}\proj  \\
&=
\proj A_{\pi_k}\proj A_{\pi_{k-1}}\cdots A_{\pi_0}\proj  \\
&=
\proj A_{\pi_k} A_{\pi_{k-1}}\cdots A_{\pi_0}\proj ,
\end{align*}
where the last equality uses \(\proj A_{\pi_k}\proj=\proj A_{\pi_k}\). This
proves the product identity by induction.

Now use the induced infinity norm. Since \(P\Pi^\pi\) is row-stochastic,
\(\|A_\pi\|_\infty=\gamma\) for every deterministic policy \(\pi\). Thus
\[
\|A_{\pi_{k-1}}\cdots A_{\pi_0}\|_\infty
\le
\gamma^k .
\]
Combining this bound with the product identity gives
\[
\|\bar A_{\pi_{k-1}}\cdots \bar A_{\pi_0}\|_\infty
\le
\|\proj\|_\infty^2\,
\gamma^k .
\]
Taking the maximum over all deterministic switching words, taking the \(k\)-th
root, and then letting \(k\to\infty\), the constant
\(\|\proj\|_\infty^2\) disappears. Since the JSR is independent of the chosen
submultiplicative norm,
\[
\bar\rho\le\gamma .
\]
\end{proof}

\section{Proof of sharpness of the classical rate for standard Q-VI}
\label{app:proof-ordinary-sharp}

\noindent\textbf{Restatement of \cref{prop:ordinary-sharp}.}
For standard Q-VI, no uniform global convergence estimate to \(Q^\star\) can
have a rate smaller than \(\gamma\). In particular, if
\[
Q_0=Q^\star+c\one,
\qquad c\neq0,
\]
then standard Q-VI satisfies
\[
Q_k-Q^\star=\gamma^k c\one,
\qquad
\forall k\ge0.
\]

\begin{proof}
The claim follows from the shift identity
\(F(Q^\star+c\one)=Q^\star+\gamma c\one\). Iterating gives
\(Q_k-Q^\star=\gamma^k c\one\). If a bound with rate \(r<\gamma\) were valid for
all initial conditions, this trajectory would require
\(\gamma^k\le C r^k\) for all \(k\), which is impossible for finite \(C\).
\end{proof}

\section{Proof of the fixed-point lemma for residual deflation}
\label{app:proof-deflated-fixed-point}

\noindent\textbf{Restatement of \cref{lem:deflated-fixed-point}.}
The deflated map
\[
T_{\rm def}(Q)
:=
F(Q)
+
\frac{\gamma}{1-\gamma}
\left(
\frac1{|\calS||\calA|}\one^\top(F(Q)-Q)
\right)\one
\]
has the unique fixed point \(Q^\star\).

\begin{proof}
Since \(F(Q^\star)=Q^\star\), \(T_{\rm def}(Q^\star)=Q^\star\). Conversely,
suppose that \(Q=T_{\rm def}(Q)\). Let
\[
r:=F(Q)-Q,
\qquad
\bar r:=(|\calS||\calA|)^{-1}\one^\top r .
\]
Then
\[
r=-\frac{\gamma}{1-\gamma}\bar r\one.
\]
Taking the empirical mean gives
\[
\bar r=-\frac{\gamma}{1-\gamma}\bar r.
\]
Hence \(\bar r=0\), and therefore \(r=0\). Thus \(F(Q)=Q\). Since the Bellman
operator has the unique fixed point \(Q^\star\), \(Q=Q^\star\).
\end{proof}

\section{Proof of scalar-shift equivalence to standard Q-VI}
\label{app:proof-scalar-shift-equivalence}

\noindent\textbf{Restatement of \cref{lem:scalar-shift-equivalence}.}
Let \(U_{k+1}=F(U_k)\) be standard Q-VI with \(U_0=Q_0\), and let
\(Q_k\) be generated by \cref{eq:deflated-vi} from the same initialization. Then
there exist scalars \(c_k\in\R\) such that
\[
Q_k=U_k+c_k\one,
\qquad \forall k\ge0.
\]
Consequently,
\[
\proj Q_k=\proj U_k,
\qquad
\pi_{Q_k}=\pi_{U_k},
\qquad \forall k\ge0,
\]
under the same tie-breaking rule.

\begin{proof}
The claim holds at \(k=0\) with \(c_0=0\). Suppose that
\(Q_k=U_k+c_k\one\). By the shift identity \cref{eq:shift-identity},
\[
F(Q_k)=F(U_k)+\gamma c_k\one=U_{k+1}+\gamma c_k\one.
\]
Moreover,
\[
F(Q_k)-Q_k=F(U_k)-U_k+(\gamma-1)c_k\one.
\]
Taking the empirical mean and substituting into \cref{eq:deflated-vi} yields
\begin{align*}
Q_{k+1}
&=
U_{k+1}+\gamma c_k\one
+
\frac{\gamma}{1-\gamma}
\left(
\frac1{|\calS||\calA|}\one^\top(F(U_k)-U_k)+(\gamma-1)c_k
\right)\one \\
&=
U_{k+1}
+
\frac{\gamma}{1-\gamma}
\left(
\frac1{|\calS||\calA|}\one^\top(F(U_k)-U_k)
\right)\one .
\end{align*}
Thus \(Q_{k+1}=U_{k+1}+c_{k+1}\one\) for a scalar \(c_{k+1}\). The induction is
complete. Projection by \(\proj\) eliminates the scalar shift, and adding a
state-independent constant to all state-action values does not change any
statewise greedy action under the same tie-breaking rule.
\end{proof}

\section{Proof of the orthogonal decomposition of the Q-error}
\label{app:proof-error-decomposition}

\noindent\textbf{Restatement of \cref{lem:error-decomposition}.}
Let \(Q_k\) be generated by~\cref{eq:deflated-vi}, and define
\[
e_k:=Q_k-Q^\star,
\qquad
z_k:=\proj e_k .
\]
Then \(z_k\in \operatorname{span}(\one)^\perp\). Moreover, if
\[
a_k:=\frac1{|\calS||\calA|}\one^\top e_k,
\]
then the full error admits the orthogonal decomposition
\[
e_k
=
Q_k-Q^\star
=
a_k\one+\proj e_k
=
a_k\one+z_k .
\]
Here \(a_k\one\in\operatorname{span}(\one)\) is the all-ones component of the
error, while \(z_k\in\operatorname{span}(\one)^\perp\) is its orthogonal
transverse component. In addition, this decomposition is unique.

\begin{proof}
By the definition of the projection,
\[
\proj
=
I-\frac1{|\calS||\calA|}\one\one^\top .
\]
Therefore,
\[
z_k
=
\proj e_k
=
\left(I-\frac1{|\calS||\calA|}\one\one^\top\right)e_k
=
e_k-\left(\frac1{|\calS||\calA|}\one^\top e_k\right)\one
=
e_k-a_k\one .
\]
Rearranging gives
\[
e_k=a_k\one+z_k=a_k\one+\proj e_k.
\]

It remains to verify that \(z_k\) is orthogonal to the all-ones direction. Since
\(\one^\top\one=|\calS||\calA|\), we have
\[
\one^\top z_k
=
\one^\top e_k
-
a_k\one^\top\one
=
\one^\top e_k
-
\frac1{|\calS||\calA|}\one^\top e_k\cdot |\calS||\calA|
=
0 .
\]
Hence \(z_k\in \operatorname{span}(\one)^\perp\).

Finally, suppose that another decomposition is given by
\[
e_k=\alpha\one+w,
\qquad
w\in \operatorname{span}(\one)^\perp .
\]
Multiplying both sides by \((|\calS||\calA|)^{-1}\one^\top\) yields
\[
\frac1{|\calS||\calA|}\one^\top e_k
=
\alpha+\frac1{|\calS||\calA|}\one^\top w
=
\alpha,
\]
because \(\one^\top w=0\). Thus \(\alpha=a_k\), and consequently
\[
w=e_k-a_k\one=z_k .
\]
Therefore, the decomposition is unique.
\end{proof}

\section{Proof of the exact deflated switching dynamics}
\label{app:proof-exact-deflated-dynamics}

\noindent\textbf{Restatement of \cref{prop:exact-deflated-dynamics}.}
Let \(Q_k\) be generated by~\cref{eq:deflated-vi}, and define
\[
e_k:=Q_k-Q^\star,
\qquad
z_k:=\proj e_k,
\qquad
a_k:=\frac1{|\calS||\calA|}\one^\top e_k .
\]
Then
\[
z_{k+1}=\bar A_{\mu_k}z_k,
\qquad
k\in\{0,1,2,\ldots\},
\]
and
\[
a_{k+1}=\ell_{\mu_k}z_k,
\qquad
\ell_\mu:=\frac{1}{|\calS||\calA|(1-\gamma)}\one^\top A_\mu,
\qquad
k\in\{0,1,2,\ldots\}.
\]
Equivalently,
\[
\begin{bmatrix}
a_{k+1}\\ z_{k+1}
\end{bmatrix}
=
D_{\mu_k}
\begin{bmatrix}
a_k\\ z_k
\end{bmatrix},
\qquad
D_\mu:=
\begin{bmatrix}
0 & \ell_\mu\\
0 & \bar A_\mu
\end{bmatrix},
\qquad
k\in\{0,1,2,\ldots\}.
\]

\begin{proof}
By \cref{lem:exact-representation}, the Bellman residual \(r_k\) satisfies
\[
r_k=F(Q_k)-Q_k=(A_{\mu_k}-I)e_k.
\]
Subtracting \(Q^\star\) from \cref{eq:deflated-vi} and using
\(F(Q_k)-Q^\star=A_{\mu_k}e_k\) gives
\begin{equation}
\label{eq:deflated-error-recursion}
e_{k+1}
=
A_{\mu_k}e_k
+
\frac{\gamma}{1-\gamma}\bar r_k\one,
\qquad
k\in\{0,1,2,\ldots\}.
\end{equation}
Applying \(\proj\) to \cref{eq:deflated-error-recursion} and using
\cref{eq:deflated-error-decomposition,eq:common-ones-eigenrelation,eq:projection-kills-one}
gives
\[
z_{k+1}
=
\proj A_{\mu_k}z_k
=
\proj A_{\mu_k}\proj z_k
=
\bar A_{\mu_k}z_k.
\]

It remains to compute the scalar component. The residual mean is obtained from
\(r_k=(A_{\mu_k}-I)e_k\) by applying the empirical averaging functional
\((|\calS||\calA|)^{-1}\one^\top\). Substituting
\(e_k=a_k\one+z_k\) from \cref{eq:deflated-error-decomposition} gives
\begin{align*}
\bar r_k
&=
\frac1{|\calS||\calA|}\one^\top(A_{\mu_k}-I)e_k \\
&=
\frac1{|\calS||\calA|}\one^\top(A_{\mu_k}-I)(a_k\one+z_k) \\
&=
\frac{a_k}{|\calS||\calA|}\one^\top(\gamma\one-\one)
+
\frac1{|\calS||\calA|}\one^\top A_{\mu_k}z_k
-
\frac1{|\calS||\calA|}\one^\top z_k .
\end{align*}
Here \(A_{\mu_k}\one=\gamma\one\) by
\cref{eq:common-ones-eigenrelation}, and \(\one^\top z_k=0\) because
\(z_k=\proj e_k\in \operatorname{span}(\one)^\perp\). Therefore
\[
\bar r_k
=
(\gamma-1)a_k+
\frac1{|\calS||\calA|}\one^\top A_{\mu_k}z_k.
\]
Taking empirical means in \cref{eq:deflated-error-recursion} gives
\begin{align*}
a_{k+1}
&=
\gamma a_k+\frac1{|\calS||\calA|}\one^\top A_{\mu_k}z_k
+
\frac{\gamma}{1-\gamma}
\left((\gamma-1)a_k+\frac1{|\calS||\calA|}\one^\top A_{\mu_k}z_k\right)\\
&=
\frac{1}{|\calS||\calA|(1-\gamma)}\one^\top A_{\mu_k}z_k.
\end{align*}
This proves the stated block representation.
\end{proof}

\section{Proof of the associated full switched-family JSR theorem}
\label{app:proof-deflated-jsr}

\noindent\textbf{Restatement of \cref{thm:deflated-jsr}.}
The associated full stochastic-policy switched family of the deflated Q-VI
iteration \cref{eq:deflated-vi} satisfies
\[
\rho_{\rm def}=\bar\rho.
\]
Consequently, every actual deflated Q-VI trajectory has an upper asymptotic
rate no larger than \(\bar\rho\) in the switched-family sense made explicit in
\cref{thm:explicit-deflated-rate}. Since \(\bar\rho\le\gamma\), this certified
rate is no worse than the classical worst-case full Q-VI rate identified in
\cref{prop:qvi-full-jsr,prop:ordinary-sharp}. If \(\bar\rho<\gamma\), the
certified switched-family rate is strictly smaller than \(\gamma\).

\begin{proof}
By \cref{prop:exact-deflated-dynamics}, the associated deflated Q-VI family is
precisely \(\mathcal D_{\rm def}\). Thus, by the definition of
\(\rho_{\rm def}\) and by \cref{lem:block-upper-triangular-jsr},
\[
\rho_{\rm def}
=
\max\left\{0,
\rho\left(\{\bar A_\mu:\mu\text{ stochastic}\}\right)
\right\}
=
\rho\left(\{\bar A_\mu:\mu\text{ stochastic}\}\right).
\]
The stochastic-policy family is the convex hull of the deterministic-policy
family, because every stochastic stationary policy can be written as a convex
combination of deterministic stationary policies. Since \(\mu\mapsto A_\mu\) is
affine and \(\bar A_\mu=\proj A_\mu\proj\) is linear in \(A_\mu\),
\[
\{\bar A_\mu:\mu\text{ stochastic}\}
=
\co\left(\{\bar A_\pi:\pi\in\Theta\}\right).
\]
By \cref{lem:jsr-convexification},
\[
\rho\left(\{\bar A_\mu:\mu\text{ stochastic}\}\right)
=
\rho\left(\{\bar A_\pi:\pi\in\Theta\}\right)
=
\bar\rho.
\]
Therefore \(\rho_{\rm def}=\bar\rho\). Since \(\bar\rho\le\gamma\), the rate
comparison follows.
\end{proof}

\section{Proof of the explicit full convergence rate theorem}
\label{app:proof-explicit-deflated-rate}

\noindent\textbf{Restatement of \cref{thm:explicit-deflated-rate}.}
Consider \cref{eq:deflated-vi}. Fix any
\(\varepsilon\in(0,1-\bar\rho)\), and define
\[
\beta_\varepsilon:=\bar\rho+\varepsilon .
\]
Then there exists \(C_{\beta_\varepsilon}>0\) such that, for all \(k\ge1\),
\[
\|Q_k-Q^\star\|_2
\le
C_{\beta_\varepsilon}\beta_\varepsilon^{k-1}\|\proj(Q_0-Q^\star)\|_2.
\]
Consequently,
\[
\limsup_{k\to\infty}\|Q_k-Q^\star\|_2^{1/k}\le\bar\rho.
\]
If \(\bar\rho<\gamma\), then deflated Q-VI has a certified full-error convergence rate strictly smaller than the classical worst-case full-error rate of standard Q-VI.

\begin{proof}
By~\cref{thm:deflated-jsr}, the projected stochastic-policy family has the same
JSR as the deterministic projected family, namely \(\bar\rho\). Fix
\(\varepsilon\in(0,1-\bar\rho)\) and define
\(\beta_\varepsilon:=\bar\rho+\varepsilon\). Applying
\cref{lem:jsr-product-bound} to the projected stochastic-policy family gives a
constant \(c_{\beta_\varepsilon}>0\) such that, for every length \(m\ge0\) and
every stochastic-policy switching word,
\[
\|\bar A_{\mu_{m-1}}\cdots\bar A_{\mu_0}\|_2
\le
c_{\beta_\varepsilon}\beta_\varepsilon^m.
\]

Iterating the projected recursion in \cref{prop:exact-deflated-dynamics} gives
\[
z_k
=
\bar A_{\mu_{k-1}}\cdots\bar A_{\mu_0}z_0.
\]
Therefore
\[
\|z_k\|_2
\le
\|\bar A_{\mu_{k-1}}\cdots\bar A_{\mu_0}\|_2\|z_0\|_2
\le
c_{\beta_\varepsilon}\beta_\varepsilon^k\|z_0\|_2.
\]

Next, recall that \(\ell_\mu\) is the row vector defined in
\cref{prop:exact-deflated-dynamics},
\[
\ell_\mu:=\frac{1}{|\calS||\calA|(1-\gamma)}\one^\top A_\mu.
\]
It is the linear functional that maps the previous projected error \(z_k\) to
the next averaged scalar component \(a_{k+1}\). Since the set of stochastic
policies is a finite product of compact simplexes and \(\mu\mapsto\ell_\mu\) is
continuous, its Euclidean operator norm is uniformly bounded. Hence
\[
L_\ell:=\sup_{\mu}\|\ell_\mu\|_2<\infty.
\]
For \(k\ge1\), \cref{prop:exact-deflated-dynamics} gives
\[
|a_k|=|\ell_{\mu_{k-1}}z_{k-1}|
\le L_\ell\|z_{k-1}\|_2
\le
L_\ell c_{\beta_\varepsilon}\beta_\varepsilon^{k-1}\|z_0\|_2.
\]
Since \(e_k=a_k\one+z_k\),
\begin{align*}
\|e_k\|_2
&\le
\|a_k\one\|_2+\|z_k\|_2\\
&=
\sqrt{|\calS||\calA|}|a_k|+\|z_k\|_2\\
&\le
c_{\beta_\varepsilon}(\sqrt{|\calS||\calA|} L_\ell+\beta_\varepsilon)
\beta_\varepsilon^{k-1}\|z_0\|_2.
\end{align*}
Taking
\[
C_{\beta_\varepsilon}:=
c_{\beta_\varepsilon}(\sqrt{|\calS||\calA|} L_\ell+\beta_\varepsilon)
\]
and using \(z_0=\proj(Q_0-Q^\star)\) proves the finite-time bound.

It remains to prove the limsup conclusion. If \(z_0=0\), then
\cref{prop:exact-deflated-dynamics} gives \(z_k=0\) and \(a_k=0\) for all
\(k\ge1\), so \(e_k=0\) for all \(k\ge1\) and the claim is immediate. Otherwise,
the finite-time bound implies
\[
\|e_k\|_2^{1/k}
\le
\left(C_{\beta_\varepsilon}\|z_0\|_2\beta_\varepsilon^{-1}\right)^{1/k}
\beta_\varepsilon.
\]
The constant factor satisfies
\[
\lim_{k\to\infty}
\left(C_{\beta_\varepsilon}\|z_0\|_2\beta_\varepsilon^{-1}\right)^{1/k}=1.
\]
Therefore
\[
\limsup_{k\to\infty}\|Q_k-Q^\star\|_2^{1/k}
=
\limsup_{k\to\infty}\|e_k\|_2^{1/k}
\le \beta_\varepsilon.
\]
Since this holds for every \(\varepsilon\in(0,1-\bar\rho)\), letting
\(\varepsilon\downarrow0\) gives
\[
\limsup_{k\to\infty}\|Q_k-Q^\star\|_2^{1/k}\le\bar\rho.
\]
\end{proof}

\section{Proof of the scalar lifting bound}
\label{app:proof-scalar-lifting-bound}

\noindent\textbf{Restatement of \cref{lem:scalar-lifting-bound}.}
For
\[
\ell_\mu:=\frac{1}{|\calS||\calA|(1-\gamma)}\one^\top A_\mu,
\]
one has
\[
L_\ell:=\sup_\mu\|\ell_\mu\|_2\le \frac{\gamma}{1-\gamma}.
\]
Consequently, if the projected stochastic-policy family satisfies
\[
\|\bar A_{\mu_{m-1}}\cdots\bar A_{\mu_0}\|_2
\le c_{\beta_\varepsilon}\beta_\varepsilon^m,
\qquad
\beta_\varepsilon:=\bar\rho+\varepsilon,
\]
for all \(m\ge0\) and all stochastic-policy switching words, then the constant
in the full-error estimate can be chosen as
\[
C_{\beta_\varepsilon}
:=c_{\beta_\varepsilon}
\left(\frac{\sqrt{|\calS||\calA|}\,\gamma}{1-\gamma}+\beta_\varepsilon\right).
\]
Moreover, if \(N_{\beta_\varepsilon}\) is the finite threshold in
\cref{lem:jsr-product-bound} for this projected family, then one may use the
coarse bound
\[
 c_{\beta_\varepsilon}
 \le
 \max\left\{1,\max_{0\le m<N_{\beta_\varepsilon}}
 \left(\frac{\gamma\sqrt{|\calS||\calA|}}{\beta_\varepsilon}\right)^m\right\},
\]
and hence
\[
C_{\beta_\varepsilon}
\le
\max\left\{1,\max_{0\le m<N_{\beta_\varepsilon}}
\left(\frac{\gamma\sqrt{|\calS||\calA|}}{\beta_\varepsilon}\right)^m\right\}
\left(\frac{\sqrt{|\calS||\calA|}\,\gamma}{1-\gamma}+\beta_\varepsilon\right).
\]

\begin{proof}
For every stochastic policy \(\mu\), the matrix \(P\Pi^\mu\) is row-stochastic.
Hence \(A_\mu=\gamma P\Pi^\mu\) is nonnegative and every row of \(A_\mu\) sums to
\(\gamma\). Therefore the nonnegative row vector \(\one^\top A_\mu\) has total
mass
\[
\|\one^\top A_\mu\|_1=\one^\top A_\mu\one=\gamma |\calS||\calA|.
\]
Since \(\|v\|_2\le\|v\|_1\) for every vector \(v\),
\[
\|\ell_\mu\|_2
=\frac{1}{|\calS||\calA|(1-\gamma)}\|\one^\top A_\mu\|_2
\le\frac{1}{|\calS||\calA|(1-\gamma)}\|\one^\top A_\mu\|_1
=\frac{\gamma}{1-\gamma}.
\]
Taking the supremum over \(\mu\) gives the first claim. The stated value of
\(C_{\beta_\varepsilon}\) follows by substituting this bound on \(L_\ell\) into
the estimate in \cref{thm:explicit-deflated-rate}. For the coarse bound on
\(c_{\beta_\varepsilon}\), note that
\(\|\bar A_\mu\|_2\le\|A_\mu\|_2\le\gamma\sqrt{|\calS||\calA|}\), because
\(\|P\Pi^\mu\|_\infty=1\), \(\|P\Pi^\mu\|_1\le |\calS||\calA|\), and
\(\|M\|_2\le\sqrt{\|M\|_1\|M\|_\infty}\). Therefore every length-\(m\) projected
product has Euclidean norm at most \((\gamma\sqrt{|\calS||\calA|})^m\). Combining this
finite-length bound with the construction of \(c_{\beta_\varepsilon}\) in
\cref{lem:jsr-product-bound} gives the displayed upper bounds.
\end{proof}

\section{Proof of the pure all-ones error corollary}
\label{app:proof-pure-ones}

\noindent\textbf{Restatement of \cref{cor:pure-ones}.}
If \(Q_0-Q^\star\in\operatorname{span}(\one)\), then deflated Q-VI satisfies
\[
Q_1=Q^\star.
\]

\begin{proof}
If \(Q_0-Q^\star\in\operatorname{span}(\one)\), then \(z_0=0\). By
\cref{prop:exact-deflated-dynamics}, \(z_1=0\) and
\(a_1=\ell_{\mu_0}z_0=0\). Thus \(Q_1-Q^\star=0\).
\end{proof}

\section{Proof of the iteration complexity corollary}
\label{app:proof-iteration-complexity}

\noindent\textbf{Restatement of \cref{cor:iteration-complexity}.}
Let \(\varepsilon_Q>0\), fix \(\varepsilon\in(0,1-\bar\rho)\), and define
\(\beta_\varepsilon:=\bar\rho+\varepsilon\). If
\(\proj(Q_0-Q^\star)=0\), then \(Q_1=Q^\star\). Otherwise, the bound
\[
k
\ge
1+
\left\lceil
\frac{
\ln\left(C_{\beta_\varepsilon}\|\proj(Q_0-Q^\star)\|_2/\varepsilon_Q\right)
}{
-\ln \beta_\varepsilon
}
\right\rceil
\]
implies
\[
\|Q_k-Q^\star\|_2\le\varepsilon_Q.
\]

\begin{proof}
If \(\proj(Q_0-Q^\star)=0\), then \cref{cor:pure-ones} gives
\(Q_1=Q^\star\). Otherwise, \cref{thm:explicit-deflated-rate} gives
\[
\|Q_k-Q^\star\|_2
\le
C_{\beta_\varepsilon}\beta_\varepsilon^{k-1}
\|\proj(Q_0-Q^\star)\|_2.
\]
Since \(0<\beta_\varepsilon<1\), the stated lower bound on \(k\) implies
\[
C_{\beta_\varepsilon}\beta_\varepsilon^{k-1}
\|\proj(Q_0-Q^\star)\|_2
\le
\varepsilon_Q.
\]
Substituting this inequality into the explicit rate bound proves the claim.
\end{proof}

\section{Proof of the fixed-point lemma for the \(d\)-weighted map}
\label{app:proof-d-weighted-fixed-point}

\noindent\textbf{Restatement of \cref{lem:d-weighted-fixed-point}.}
The map \(T_d\) in \cref{eq:d-weighted-deflated-map} has the unique fixed point
\(Q^\star\).

\begin{proof}
Since \(F(Q^\star)=Q^\star\), \(T_d(Q^\star)=Q^\star\). Conversely, suppose that
\(Q=T_d(Q)\). Let
\[
r:=F(Q)-Q,
\qquad
\bar r_d:=d^\top r.
\]
Then
\[
r=-\frac{\gamma}{1-\gamma}\bar r_d\one.
\]
Multiplying by \(d^\top\) and using \(d^\top\one=1\) gives
\[
\bar r_d=-\frac{\gamma}{1-\gamma}\bar r_d.
\]
Hence \(\bar r_d=0\), and therefore \(r=0\). Thus \(F(Q)=Q\). Since the Bellman
optimality operator has the unique fixed point \(Q^\star\), \(Q=Q^\star\).
\end{proof}

\section{Proof of the exact \(d\)-projected switching dynamics}
\label{app:proof-d-weighted-dynamics}

\noindent\textbf{Restatement of \cref{prop:d-weighted-dynamics}.}
Let \(Q_k\) be generated by \cref{eq:d-weighted-deflated-vi}, and let
\(e_k=Q_k-Q^\star\). For the stochastic-policy representation in
\cref{lem:exact-representation}, define
\[
z_k:=\boldsymbol{\Pi}_d e_k,
\qquad
 a_k:=d^\top e_k.
\]
Then
\begin{align*}
 z_{k+1}
 &=
 \boldsymbol{\Pi}_d A_{\mu_k}\boldsymbol{\Pi}_d\, z_k,
 \qquad
 k\in\{0,1,2,\ldots\},
 \\
 a_{k+1}
 &=
 \ell_{\mu_k}z_k,
 \qquad
 \ell_\mu:=\frac{1}{1-\gamma}d^\top A_\mu,
 \qquad
 k\in\{0,1,2,\ldots\}.
\end{align*}
Equivalently,
\[
\begin{bmatrix}
 a_{k+1}\\ z_{k+1}
\end{bmatrix}
=
D_{\mu_k}
\begin{bmatrix}
 a_k\\ z_k
\end{bmatrix},
\qquad
D_{\mu}:=
\begin{bmatrix}
0 & \ell_\mu\\
0 & \boldsymbol{\Pi}_d A_\mu\boldsymbol{\Pi}_d
\end{bmatrix},
\qquad
k\in\{0,1,2,\ldots\},
\]
where the lower-right block is regarded as a linear map from \(W_d\) to
\(W_d\). Thus the all-ones component again has zero autonomous diagonal
dynamics.

\begin{proof}
By \cref{lem:exact-representation},
\[
F(Q_k)-Q_k=(A_{\mu_k}-I)e_k.
\]
The error recursion induced by \cref{eq:d-weighted-deflated-map} is
\[
e_{k+1}
=
A_{\mu_k}e_k
+
\frac{\gamma}{1-\gamma}
\one d^\top(A_{\mu_k}-I)e_k,
\qquad
k\in\{0,1,2,\ldots\}.
\]
Applying \(\boldsymbol{\Pi}_d\) eliminates the correction term because \(\boldsymbol{\Pi}_d\one=0\). With
\(e_k=a_k\one+z_k\) and \(A_{\mu_k}\one=\gamma\one\),
\[
z_{k+1}
=
\boldsymbol{\Pi}_d A_{\mu_k}z_k
=
\boldsymbol{\Pi}_d A_{\mu_k}\boldsymbol{\Pi}_d z_k.
\]
Since \(z_k\in W_d\) and \(\boldsymbol{\Pi}_d A_{\mu_k}\boldsymbol{\Pi}_d\) maps
\(W_d\) into \(W_d\), this is precisely its action on \(z_k\).

Next, taking the \(d\)-average gives
\begin{align*}
a_{k+1}
&=
d^\top A_{\mu_k}e_k
+
\frac{\gamma}{1-\gamma}d^\top(A_{\mu_k}-I)e_k\\
&=
\frac{1}{1-\gamma}d^\top A_{\mu_k}e_k
-
\frac{\gamma}{1-\gamma}d^\top e_k.
\end{align*}
Substituting \(e_k=a_k\one+z_k\) and using
\(A_{\mu_k}\one=\gamma\one\), \(d^\top\one=1\), and
\(d^\top z_k=0\) yields
\[
a_{k+1}
=
\frac{1}{1-\gamma}d^\top A_{\mu_k}z_k.
\]
This proves the block representation.
\end{proof}

\section{Proof of the distribution-weighted JSR theorem}
\label{app:proof-d-weighted-jsr}

\noindent\textbf{Restatement of \cref{thm:d-weighted-jsr}.}
For any fixed \(d\in\Delta_{|\calS||\calA|}\), the associated full stochastic-policy
switched-family JSR of \cref{eq:d-weighted-deflated-vi} is
\[
\rho_d=\bar\rho\le\gamma.
\]
Consequently, the distribution-weighted deflated Q-VI map has the same
associated switched-family rate as the uniform deflated Q-VI map in
\cref{eq:deflated-vi}.

\begin{proof}
By \cref{prop:d-weighted-dynamics}, the associated switched family is block
upper triangular with diagonal blocks \(0\) and
\(\boldsymbol{\Pi}_d A_\mu\boldsymbol{\Pi}_d\), where the latter block acts on \(W_d\). By
\cref{lem:block-upper-triangular-jsr}, the full switched-family JSR equals the
operator JSR of this lower-right family on \(W_d\). More explicitly, this means
\[
\rho_{W_d}\!\left(\{\boldsymbol{\Pi}_d A_\mu\boldsymbol{\Pi}_d:\mu\text{ stochastic}\}\right)
:=
\lim_{k\to\infty}
\sup_{\mu_0,\ldots,\mu_{k-1}}
\left\|
(\boldsymbol{\Pi}_d A_{\mu_{k-1}}\boldsymbol{\Pi}_d)\cdots
(\boldsymbol{\Pi}_d A_{\mu_0}\boldsymbol{\Pi}_d)
\right\|_{\mathrm{op},d}^{1/k},
\]
where each factor is regarded as a linear map \(W_d\to W_d\), and
\(\|\cdot\|_{\mathrm{op},d}\) is any induced operator norm on \(W_d\). Thus
\[
\rho_d
=
\rho_{W_d}\!\left(\{\boldsymbol{\Pi}_d A_\mu\boldsymbol{\Pi}_d:\mu\text{ stochastic}\}\right).
\]
It remains to compare this value with \(\bar\rho\). Recall that
\[
\operatorname{span}(\one)^\perp=\{x\in\R^{|\calS||\calA|}:\one^\top x=0\},
\qquad
W_d:=\operatorname{span}(d)^\perp
=\{x\in\R^{|\calS||\calA|}:d^\top x=0\}.
\]
The map \(S_d:\operatorname{span}(\one)^\perp\to W_d\), \(S_d x:=\boldsymbol{\Pi}_d x\), is a linear bijection with
inverse \(S_d^{-1}y=\proj y\). Indeed, if \(x\in \operatorname{span}(\one)^\perp\) and
\(\boldsymbol{\Pi}_d x=0\), then \(x=(d^\top x)\one\), and the constraint \(\one^\top x=0\)
implies \(x=0\). For \(y\in W_d\), \(\boldsymbol{\Pi}_d\proj y=\boldsymbol{\Pi}_d y=y\); for
\(x\in \operatorname{span}(\one)^\perp\), \(\proj\boldsymbol{\Pi}_d x=\proj x=x\). This remains true even when some
components of \(d\) are zero; only \(d^\top\one=1\) is needed.

For any deterministic policy \(\pi\) and any \(y\in W_d\),
\begin{align*}
S_d(\proj A_\pi\proj)S_d^{-1}y
&=
\boldsymbol{\Pi}_d\proj A_\pi\proj y \\
&=
\boldsymbol{\Pi}_d A_\pi\proj y \\
&=
\boldsymbol{\Pi}_d A_\pi y \\
&=
\boldsymbol{\Pi}_d A_\pi\boldsymbol{\Pi}_d y .
\end{align*}
The second equality uses \(\boldsymbol{\Pi}_d\proj=\boldsymbol{\Pi}_d\). The third equality follows because
\(y-\proj y\in\operatorname{span}(\one)\) and
\(\boldsymbol{\Pi}_d A_\pi\one=\gamma\boldsymbol{\Pi}_d\one=0\). The last equality uses \(y\in W_d\), so
\(\boldsymbol{\Pi}_d y=y\). Thus the deterministic family
\(\{\boldsymbol{\Pi}_d A_\pi\boldsymbol{\Pi}_d:\pi\in\Theta\}\), regarded on \(W_d\), is similar to the
orthogonally projected family
\(\{\proj A_\pi\proj:\pi\in\Theta\}\), regarded on \(\operatorname{span}(\one)^\perp\). Similarity
preserves the JSR.

The stochastic-policy family is the convex hull of the deterministic-policy
family, and \cref{lem:jsr-convexification} gives the same JSR after
convexification. Therefore, writing \(\rho_{\operatorname{span}(\one)^\perp}\) for the analogous
operator JSR on \(\operatorname{span}(\one)^\perp\),
\[
\rho_d
=
\rho_{W_d}\!\left(\{\boldsymbol{\Pi}_d A_\pi\boldsymbol{\Pi}_d:\pi\in\Theta\}\right)
=
\rho_{\operatorname{span}(\one)^\perp}\!\left(\{\proj A_\pi\proj:\pi\in\Theta\}\right)
=
\bar\rho.
\]
Finally, \(\bar\rho\le\gamma\) by \cref{lem:restricted-jsr-bound}.
\end{proof}

\bibliographystyle{plainnat}
\bibliography{reference}

\end{document}